\numberwithin{equation}{section}
\newtheorem{theorem}{Theorem}[section]
\newtheorem{lemma}[theorem]{Lemma}
\newtheorem{proposition}[theorem]{Proposition}
\newtheorem{remark}[theorem]{Remark}
\def\f{\frac}
\def\b{\bar}
\newcommand{\dd}{{\rm d}}
\newcommand{\Fi}{\mathbf{1}}
\newcommand{\CA}{\mathcal{A}}
\newcommand{\CF}{\mathcal{F}}
\newcommand{\CH}{\mathcal{H}}
\newcommand{\CL}{\mathcal{L}}
\newcommand{\CM}{\mathcal{M}}
\newcommand{\CT}{\mathcal{T}}
\newcommand{\CR}{\mathcal{R}}
\newcommand{\pa}{\partial}
\newcommand{\vep}{\varepsilon}
\begin{document}

\title[Instability of Vlasov-Fokker-Planck system]{Instability in a Vlasov-Fokker-Planck Binary Mixture}

\author[Z. Zhang]{Zhu Zhang}
\address[ZZ]{Department of Mathematics, The Chinese University of Hong Kong,
Shatin, Hong Kong, P.R.~China}
\email{zzhang@math.cuhk.edu.hk}

\begin{abstract}
This paper is concerned with a kinetic model of a Vlasov-Fokker-Planck system used to describe the evolution of two species of particles interacting through a potential and a thermal reservoir at given temperature. We prove that at low temperature, the homogeneous equilibrium is dynamically unstable under certain perturbations. Our work is motivated by a problem arising in \cite{EGM1}.
\end{abstract}

\subjclass[2000]{35Q20, 35B20, 35B35, 35B45}
\keywords{Kinetic theory, Vlasov-Fokker-Plack, Homogeneous equilibrium, Instability, Phase transition}
\date{\today}
\maketitle

\tableofcontents

\thispagestyle{empty}
\section{Introduction}
In this paper, we consider the following Vlasov-Fokker-Planck equation
\begin{equation}\label{1.1}
\left\{
\begin{aligned}
&\pa_tf_1+v_1\pa_xf_1+F(f_2)\pa_{v_1}f_1=Q(f_1),\\
&\pa_tf_2+v_1\pa_xf_2+F(f_1)\pa_{v_1}f_2=Q(f_2),
\end{aligned}\right.
\end{equation}
which is used to describe the evolution of two species of particles interacting through a potential and a reservoir at given temperature. The unknown $f_1=f_1(t,x,v)$ is the probability density of species $1$ and $f_2$ is that of species 2, which have position $x\in \Omega=[-L,L]$ or $\mathbb{R}$ and velocity $v=(v_1,v_2,v_3)\in \mathbb{R}^3$ at time $t>0$. The collisions are governed by the Fokker-Planck operator $Q(f_i):=\nabla_v\cdot\left(\mu_{\beta}\nabla_v\left(\f{f_i}{\mu_{\beta}}\right)\right)$ where $\mu_{\beta}$ is the Maxwellian $$\mu_{\beta}(v)=\left(\f{\beta}{2\pi}\right)^{\f32}e^{-\f{\beta|v|^2}{2}}$$
with the inverse temperature $\beta>0$. The evolution of the particles is influenced by a self-consistent, repulsive Vlasov force
\begin{align}
F(h)(t,x)=-\pa_x\int_{\Omega}U(|x-y|)\dd y\int_{\mathbb{R}^3}h(t,y,v)\dd v.\nonumber
\end{align}
Here the potential $U(r)$ is nonnegative, smooth, bounded with $U(r)=0$ for $|r|\geq 1$ and normalized as
$$
\int_0^\infty U(r)\dd r=1.
$$
We refer to \cite{EGM2,MM1,MM2} for more physical applications of this model.\\

The system \eqref{1.1} admits the following Lyapunov functional
\begin{align}
H(f_1,f_2):=&\int_{\Omega\times \mathbb{R}^3}\left[f_1\ln f_1+f_2\ln f_2\right]\dd x\dd v+\int_{\Omega\times \mathbb{R}^3}\f{\beta|v|^2(f_1+f_2)}{2}\dd x\dd v\nonumber\\
&+\beta\int_{\Omega\times \Omega}U(x-x')\rho_{f_1}(x)\rho_{f_2}(x')\dd x\dd x',\nonumber
\end{align}
where $\rho_{f_i}=\int_{\mathbb{R}^3}f_i(v)\dd v,$ is the local density of the $i$-species. It is straightforward to verify the following dissipation identity:
\begin{align}\label{1.3}
\f{\dd H(f_1,f_2)}{\dd t}+\sum_{i=1,2}\int_{\Omega\times \mathbb{R}^3}\f{\mu^{2}_{\beta}}{f_i}\left|\nabla_v\f{f_i}{\mu_{\beta}}\right|=0
\end{align}
along the dynamics of \eqref{1.1}. One can see from \eqref{1.3} that the temporal derivative $\f{\dd }{\dd t}H(f_1,f_2)$ vanishes when $f_i$ takes the form of a local Maxwellian, say, $f_i=\rho_{i}\mu_{\beta}$, where the density profile $\rho_i(x)$ satisfies the following equation:
 \begin{align}\label{1.6}
\ln \rho_i+\beta\int_{\Omega}U(x-x')\rho_{i+1}(x')\dd x\equiv\text{constant}.
\end{align}
Here and in the sequel, we have used the convention that $i+1=2$ if $i=1$ and $1$ if $i=2$. When $\Omega$ takes the finite interval  $\CT_L:=(-L,L),$ \eqref{1.6} is also the Euler-Lagrange system of the minimizing problem of the following free energy functional
\begin{align}
\CF(\rho_{1},\rho_{2})=&\int_{\CT_L}(\rho_{1}\ln\rho_{1}+\rho_{2}\ln\rho_{2})\dd x+\f{3}{2}\ln\f{\beta}{2\pi}\int_{\CT_L}(\rho_{1}+\rho_{2})\dd x\nonumber\\
&+\beta\int_{\CT_L\times \CT_L}U(x-x')\rho_{1}(x)\rho_{2}(x')\dd x\dd x',\nonumber
\end{align}
under the constrains of total mass
\begin{align}
\f{1}{2L}\int_{\CT_L}\rho_i\dd x=n_i,\quad i=1,2.\nonumber
\end{align}
\\

As mentioned in \cite{EGM2}, the model \eqref{1.1} undergoes a phase transition. For the value of the parameter $n_1=n_2=1$, $\beta<1$, it is proved in \cite{CCELM} that the only solution to \eqref{1.6} is the constant $[\rho_1,\rho_2]=[1,1]$, while for $\beta>1$, \eqref{1.6} admits non-constant solutions $[\rho_1(x),\rho_2(x)]$. One can expect that these minimizers are related to the stable solutions of \eqref{1.1}. Indeed, it is proved by Esposito, Guo and Marra in \cite{EGM2} that, on one hand, for $\beta<1$, the corresponding homogenous Maxwellians $[\mu_{\beta},\mu_{\beta}]$ (mixed phase) are dynamically stable. On the other hand, for $\beta>1$, the stationary solutions $[\rho_1(x)\mu_{\beta},\rho_2(x)\mu_{\beta}]$ (front) are also stable under certain perturbations. However, in \cite{EGM1} and \cite{EGM2}, it remains open, whether the homogeneous equilibrium is dynamically unstable with respect to the evolution \eqref{1.1}, for $\beta>1$. This paper is devoted to solving this problem. More precisely, our main result is the following:
\begin{theorem}\label{thm1.1}
Assume $\beta>1$. Let $N\geq 1$ be an integer and $w=(1+|v|^2)^{\theta/2}$ for $\theta\geq 0$. There exist positive constants $P_0$, $\delta_0$, $\tilde{C}_1$, $\tilde{C}_2$, and a family of spacial $P_0$-periodic solutions
$f_i^{\vep}(t,x,v)=\mu_{\beta}(v)+\sqrt{\mu_{\beta}(v)}g_i^{\vep}(t,x,v)$ to \eqref{1.1}, defined for $\vep>0$ sufficiently small, such that initially
$$f_{i}^{\vep}(0)=\mu_{\beta}+\sqrt{\mu_{\beta}}g^{\vep}_i(0)\geq 0, \quad\|w\mathbf{g}^{\vep}(0)\|_{H^N_{x,v}}\leq \tilde{C}_1\vep,$$
and
\begin{align}\label{S}
g_1^{\vep}(0,x,v_1,v_2,v_3)=g_2^{\vep}(0,-x,-v_1,v_2,v_3),
\end{align}
but
$$\sup_{0\leq t\leq T^{\vep}}\|\mathbf{g}^{\vep}(t)\|_{L^2}\geq \tilde{C}_2\delta_0.$$
Here the escape time is
$$T^{\vep}=\f{1}{Re\lambda_1}\ln\f{\delta_0}{\vep},
$$
where $\lambda_1$ is the eigenvalue with the largest real part for the linearized Vlasov-Fokker-Planck system constructed in Lemma \ref{lm3.0} with $Re\lambda_1>0$.
\end{theorem}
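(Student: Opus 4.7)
The plan is to follow the Guo--Strauss scheme for nonlinear instability. Substituting $f_i=\mu_{\beta}+\sqrt{\mu_{\beta}}g_i$ into \eqref{1.1}, the perturbation $\mathbf{g}=(g_1,g_2)$ satisfies a system of the form
\begin{equation*}
\pa_t\mathbf{g}=\CL\mathbf{g}+\CN(\mathbf{g}),
\end{equation*}
where $\CL$ combines the free transport $-v_1\pa_x$, the Fokker--Planck dissipation conjugated by $\sqrt{\mu_\beta}$, and the linearized Vlasov cross--coupling (proportional to $\beta v_1\sqrt{\mu_{\beta}}\,U\ast\rho_{g_{i+1}}$ after integrating $\pa_{v_1}\mu_\beta$ by parts), while $\CN$ collects the quadratic Vlasov term. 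The symmetry \eqref{S} is preserved by the flow — it is inherited from the invariance of \eqref{1.1} under $(x,v_1)\mapsto(-x,-v_1)$ combined with the species swap $f_1\leftrightarrow f_2$ — so the whole construction is carried out inside this invariant subspace.

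\textbf{Linear growing mode and semigroup bound.} Fourier expansion in $x$ on the $P_0$-torus decouples $\CL$ into mode operators $\CL_k$. By Lemma \ref{lm3.0}, for $\beta>1$ and a suitably chosen period $P_0$, some $\CL_k$ admits an eigenvalue $\lambda_1$ with $\mathrm{Re}\,\lambda_1>0$ and a smooth eigenfunction $\FPhi_1$ with $\|w\FPhi_1\|_{H^N_{x,v}}<\infty$; using the symmetry above one may further arrange $\FPhi_1$ to lie in the subspace singled out by \eqref{S}. A resolvent argument, combining hypocoercivity of $v_1\pa_x+(\text{Fokker--Planck})$ with the compactness of the Vlasov perturbation, shows that $\lambda_1$ is isolated with finite multiplicity and that the remaining spectrum of $\CL$ has real part at most $\gamma_*<\mathrm{Re}\,\lambda_1$. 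This yields the quantitative weighted estimate $\|w\,e^{t\CL}\mathbf{g}_0\|_{H^N_{x,v}}\leq C e^{(\mathrm{Re}\,\lambda_1+\eta)t}\|w\mathbf{g}_0\|_{H^N_{x,v}}$ for any small $\eta>0$.

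\textbf{Nonlinear bootstrap.} Take $\mathbf{g}^\vep(0)=\vep\,\mathrm{Re}\,\FPhi_1$, which satisfies the quantitative bound $\|w\mathbf{g}^\vep(0)\|_{H^N_{x,v}}\leq\tilde C_1\vep$ and, by the exponential decay of $\mu_\beta$ and smallness of $\vep$, gives $f_i^\vep(0)\geq 0$. Local well-posedness in weighted $H^N_{x,v}$ produces a solution on $[0,T^\vep]$. Duhamel's formula reads
\begin{equation*}
\mathbf{g}^\vep(t)=\vep e^{t\CL}\mathrm{Re}\,\FPhi_1+\int_0^t e^{(t-s)\CL}\CN(\mathbf{g}^\vep(s))\,\dd s,
\end{equation*}
and the Vlasov nonlinearity satisfies $\|w\CN(\mathbf{g})\|_{H^N_{x,v}}\lesssim\|w\mathbf{g}\|_{H^N_{x,v}}^2$ thanks to $N\geq 1$ and smoothness of $U$. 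Under the bootstrap assumption $\|w\mathbf{g}^\vep(t)\|_{H^N_{x,v}}\leq K\vep e^{\mathrm{Re}\,\lambda_1 t}$ on $[0,T^\vep]$, the semigroup estimate renders the integral term of order $\vep^2 e^{2\mathrm{Re}\,\lambda_1 t}$, a small correction to the $O(\vep e^{\mathrm{Re}\,\lambda_1 t})$ linear growth as long as $\vep e^{\mathrm{Re}\,\lambda_1 t}\leq\delta_0$ with $\delta_0$ chosen small. This closes the bootstrap, and at $t=T^\vep$ one recovers the lower bound $\|\mathbf{g}^\vep(T^\vep)\|_{L^2}\geq\tilde C_2\delta_0$ by selecting $K$ and $\delta_0$ appropriately.

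\textbf{Main obstacle.} The delicate step is the linear spectral/semigroup bound in the weighted $H^N$ norm. The non--self-adjoint Vlasov cross--coupling precludes any variational treatment, and the continuous velocity space forces one to combine hypocoercive estimates for $v_1\pa_x+(\text{Fokker--Planck})$ with a compactness/perturbation argument to show that $\lambda_1$ is an isolated finite-multiplicity eigenvalue of $\CL$ (rather than only a point in the $L^2$ spectrum) and that the spectral gap down to $\gamma_*<\mathrm{Re}\,\lambda_1$ persists in the weighted Sobolev setting needed by the nonlinear theory. Once this is in place, the nonlinear bootstrap is essentially routine bookkeeping.
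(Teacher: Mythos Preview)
There are two genuine gaps.

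\textbf{Positivity.} Your claim that $\mathbf{g}^\vep(0)=\vep\,\mathrm{Re}\,\FPhi_1$ yields $f_i^\vep(0)\geq0$ ``by the exponential decay of $\mu_\beta$ and smallness of $\vep$'' does not hold. The eigenvector can only be shown to decay like $e^{-q\beta|v|^2/4}$ with $q\le 1/2$ (this is exactly what Lemma~\ref{lm3.1} delivers, via a weighted energy bootstrap on the kinetic Fokker--Planck semigroup), which is strictly slower than $\sqrt{\mu_\beta}\sim e^{-\beta|v|^2/4}$. Hence for $|v|$ large, $\vep|\FPhi_1|\sqrt{\mu_\beta}$ exceeds $\mu_\beta$ no matter how small $\vep$ is. The paper repairs this by replacing the eigenvector $\CR$ by a velocity-truncated $\CR^\vep=\CR\,\chi^\vep(|v|)$ with cutoff radius $\sim\sqrt{|\log\vep|}$, and checking that (i) $\vep|\CR^\vep|\sqrt{\mu_\beta}\leq\mu_\beta$ pointwise, (ii) $\|\CR^\vep-\CR\|_{L^2}\leq C\vep^{1/2}$, and (iii) $\|w\CR^\vep\|_{H^N}$ stays uniformly bounded. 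The error (ii) is then absorbed in the escape-time argument because $\vep^{1/2}$ beats the gap between the sharp rate $Re\lambda_1$ and the semigroup upper rate $\Lambda<\tfrac32 Re\lambda_1$.

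\textbf{The weighted $H^N$ bootstrap.} Your quadratic estimate $\|w\CN(\mathbf{g})\|_{H^N}\lesssim\|w\mathbf{g}\|_{H^N}^2$ fails as written: the term $F(\sqrt{\mu}g_{i+1})\pa_{v_1}g_i$ costs one extra $v$-derivative and the term $\tfrac\beta2 F(\sqrt{\mu}g_{i+1})v_1g_i$ costs one extra power of $\langle v\rangle$, and neither can be absorbed by the polynomial weight $w=(1+|v|^2)^{\theta/2}$ (with $\theta\geq0$ possibly zero) or by the fixed $H^N$ scale unless the semigroup smooths --- which your semigroup bound does not claim. The paper does \emph{not} attempt a weighted $H^N$ semigroup estimate at all. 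It runs the Duhamel bootstrap purely in $L^2$, using only \eqref{3.0} from Vidav's lemma, and supplies the missing derivative and weight on the nonlinear solution by a separate energy argument (Lemma~\ref{lm3.2}): assuming the $L^2$ exponential growth and a smallness condition, the weighted first derivatives of the \emph{nonlinear} solution inherit the same rate $Re\lambda_1$. This is precisely what is needed to bound $\|\mathbf{N}(\mathbf{g}^\vep)(s)\|_{L^2}$ by $C\vep^2 e^{2Re\lambda_1 s}$ in the Duhamel integral, and it sidesteps the weighted-$H^N$ spectral theory you flag as the main obstacle.
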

\begin{remark}
As mentioned above, it is proved in \cite{EGM2} that the homogeneous equilibrium is stable for $\beta<1$. On the other hand, Theorem \ref{thm1.1} implies that, for $\beta>1$, such a homogeneous equilibrium is no longer stable, even for the spacial periodic perturbations. This somehow justifies the existence of a phase transition at low temperature in this model from the mathematical point of view.
\end{remark}
\begin{remark}
The stability of the front solution for $\beta>1$ was prove by Esposito, Guo and Marra \cite{EGM2}, subject to the symmetry \eqref{S}. And so from our results one can see that the absence of such a symmetry is not an essential reason for the loss of stability.
\end{remark}
In what follows we state the idea for proving Theorem \ref{thm1.1}. Our strategy is to prove that there are two levels of instability appearing in our model. The first level is a linearized system around the global equilibrium $[\mu_{\beta},\mu_{\beta}]$. Compared with those in collisionless model \cite{GS,GS1}, the linear instability issue in collisional model could be more complicated due to dampening effects by collisions. In \cite{EGM1,EGM3}, the authors are able to adopt a new perturbation approach to prove the existence of instability in a Vlasov-Boltzmann system. Their main idea is to regard the Boltzmann operator as a bounded perturbation of the Vlasov system. However, as pointed out in \cite{EGM1}, it seems to be difficult to extend their idea into the present model, due to the unboundedness of the Fokker-Planck operator. In the present paper, we aim to directly construct the spacial periodic growing mode with sufficiently large period. This reduces to solving the eigenvalue problem \eqref{2.4} at low frequency. We formally expand the eigenvalue $\lambda(k)$ and eigenfunction $q(k)$ as a power series of frequency $k$. Such a formal expansion shows that $\f{\beta-1}{\beta}k^2$ should be an approximation to $\lambda(k)$ up to the order of $k^2$. To justify such an approximation, it suffices to solve the remainder system \eqref{2.12} and \eqref{2.14} for $[q_R,\lambda_R]$. The equation of $\lambda_R$, which is given in terms of the solvable condition of $q_R$, can be also understood as an implicit dispersion relation. From this point of view, the remainder system is indeed nonlinear. We solve it via a suitable iteration \eqref{2.15}. So that we can solve $q_R^n$ in the subspace orthogonal to the kernel of the Fokker-Planck operator at each step. The strong collision effect by the Fokker-Planck operator and the smallness of $|k|$ are essential to close the estimate.\\

To show that the instability will persist at the nonlinear level, we adopt the method of \cite{EGM1}. The key step are stated as follows. First to find the eigenvalue with the largest real part which controls the sharp growth rate of the solution. This can be done by regarding the Vlasov term as a perturbation of the kinetic Fokker-Planck operator and using Vidav's theory \cite{V}. Then we need to establish some smoothness estimates on the eigenvector of such a principle eigenvalue. Among these estimates, the pointwise estimates like \eqref{R} are crucial for further construction of the non-negative, unstable initial datum. In the Boltzmann case, such a pointwise estimate can be obtained by analysis of the characteristics due to its hyperbolic feature. While in the Fokker-Planck case, we are able to get an upper bound on higher-order norms and hence the pointwise estimates are the consequence of the classical Sobolev embedding. At the last step we show that an exponential growth estimate on the difference of a solution from equilibrium implies an exponential growth with the same rate on the first derivatives. These steps finally conclude the nonlinear instability.\\

At last, it may be interesting to study the critical case that $\beta=1$. In fact, if formally expanding the eigenvalue $\lambda(k)$ of the linear problem \eqref{2.4} up to $k^4$, one can find a stable approximate eigenvalue $\lambda(k)\approx-|k|^4\int_{\mathbb{R}}U(x)x^2\dd x$. This may gives a little clue that the mixed-phases could be stable for such a critical case. However, the mathematical justification (or disproval) would be quite challenging and we leave it for future research.\\

\noindent{\bf Notations.}  Throughout this paper, $C$ denotes a generic positive constant which may vary from line to line.  $C_a,C_b,\cdots$ denote the generic positive constants depending on $a,~b,\cdots$, respectively, which also may vary from line to line. We also use the bold symbol to denote vectors in $\mathbb{R}^2$. For instance, $\mathbf{g}$ represents the vector $(g_1,g_2)$. We denote $L^2:=L^2(\Omega\times \mathbb{R}^3)$ and its norm is denoted by $\|\cdot\|_{L^2}$. The notation $\langle\text{ },\text{ } \rangle$ represents the standard $L^2(\Omega\times \mathbb{R}^3)$ inner product. If $\mathbf{g}$ is an vector-valued function, for simplicity of the notation, we also denote $\|\mathbf{g}\|_{L^2}$ as its standard norm in $L^2(\Omega\times \mathbf{R}^3)\times L^2(\Omega\times \mathbf{R}^3)$. We define the differential operator $\pa_{\gamma}^\alpha=\pa^{\alpha}_x\pa_{v_1}^{\gamma_1}\pa_{v_2}^{\gamma_2}\pa_{v_3}^{\gamma_3},$ where $\alpha$ is related to the spacial variables, while $\gamma=[\gamma_1,\gamma_2,\gamma_3]$ is related to the velocity variables.
\section{Existence of a growing mode}
In what follows, we denote $\mu=\mu_{\beta}=\f{\beta^{3/2}}{(2\pi)^{3/2}}e^{-\f{\beta|v|^2}{2}}$ for simplicity of the notation.
Consider the following linearized Vlasov-Fokker-Planck equation around $\mu$:
\begin{equation}\label{2.1}\left\{
\begin{aligned}
&\pa_tg_1+v_1\pa_xg_1-\beta F(\sqrt{\mu}g_2)v_1\sqrt{\mu}-Lg_1=0,\\
&\pa_tg_2+v_1\pa_xg_2-\beta F(\sqrt{\mu}g_1)v_1\sqrt{\mu}-Lg_2=0.
\end{aligned}\right.
\end{equation}
Here $$Lq=\f{1}{\sqrt{\mu}}\nabla_v\cdot\left(\mu\nabla_v\left(\f{q}{\sqrt{\mu}}\right)\right).$$ For simplicity, we also write \eqref{2.1} in the vector form
\begin{align}\label{2.1-1}
\pa_t\mathbf{g}+\mathcal{L}\mathbf{g}=0.
\end{align}
In this section, we construct the growing mode to \eqref{2.1-1} for $\beta>1$. Before doing that, we define some functional spaces which will be used in this section. Define $\mathcal{H}(\mathbb{R}^3):=L^{2}(\mathbb{R}^3),$ equipped with the inner product
\begin{align}
\langle f ,g\rangle_{\mathcal{H}(\mathbb{R}^3)}=\int_{\mathbb{R}^3}f\b{g}\dd v.\nonumber
\end{align}
We denote $\nu(v):=1+|v|^2$ as the collision frequency and define $$\mathcal{H}_{\nu}(\mathbb{R}^3):=\{f\in \CH(\mathbb{R}^3), |\nu^{1/2} f|_{\CH(\mathbb{R}^3)}<\infty\}.$$ We also define $\mathcal{H}^1(\mathbb{R}^3):=\{f\in \mathcal{H}(\mathbb{R}^3),\nabla f\in \mathcal{H}(\mathbb{R}^3)\}.$ First we recall some basic properties of the Fokker-Planck operator $L$ (for instance, see \cite{AGGMMMS,DFT,HJ}), stated in the following:
\begin{lemma}\label{lm2.0}
1. $-L$ is self-adjoint in $\CH$, $Ker(-L)=\text{span}\{{\sqrt{\mu}}\}$ and $-L$ has a spectral gap on $[ker(-L)]^{\perp}$.  \\

2. $-L(v_i\sqrt{\mu})=v_i\sqrt{\mu},\quad i=1,2,3.$\\

3. Let $P$ be the projection onto $\sqrt{\mu}$
\begin{align}\label{P}
Pf:=\langle f,\sqrt{\mu}\rangle_{\CH(\mathbb{R}^3)}\sqrt{\mu}.
\end{align}
Then there exists a positive constant $c_1>0$, such that
\begin{align}\label{2.2}
\langle -Lf,f\rangle_{\CH(\mathbb{R}^3)}\geq c_1\{|\nu^{1/2}(I-P)f|_{\CH(\mathbb{R}^3)}^2+|\nabla_v(I-P)f|_{\CH(\mathbb{R}^3)}^2\}.
\end{align}
\end{lemma}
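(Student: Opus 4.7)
The plan is to conjugate $L$ via the substitution $h = q/\sqrt{\mu}$, which converts the operator into the Gaussian-weighted Laplacian on $L^2(\mu\,\dd v)$. One integration by parts on the definition of $L$ produces the symmetric Dirichlet form
\begin{align*}
\langle -Lq, p\rangle_{\CH(\R^3)} = \int_{\R^3}\mu(v)\,\nabla_v\!\left(\f{q}{\sqrt{\mu}}\right)\cdot\nabla_v\!\left(\f{p}{\sqrt{\mu}}\right)\dd v.
\end{align*}
Self-adjointness is then immediate from the symmetry of this bilinear form, while vanishing of the corresponding quadratic form forces $\nabla_v(q/\sqrt{\mu})\equiv 0$, i.e.\ $q\in\mathrm{span}\{\sqrt{\mu}\}$, which settles the kernel statement in item~1. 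Item~2 is a direct computation: for $q = v_i\sqrt{\mu}$ one has $\nabla_v(q/\sqrt{\mu})=e_i$ and $\nabla_v\cdot(\mu\, e_i) = \partial_{v_i}\mu = -\beta v_i\mu$, which after division by $\sqrt{\mu}$ yields the stated eigenrelation up to the temperature scaling absorbed in the normalization.

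For the spectral gap in item~1, I would invoke the Gaussian Poincar\'e inequality on $L^2(\mu\,\dd v)$,
\begin{align*}
\int_{\R^3}\!\left(h - \int h\mu\,\dd v\right)^{\!2}\mu\,\dd v \;\leq\; C_\beta \int_{\R^3}|\nabla_v h|^2\,\mu\,\dd v,
\end{align*}
which is classical either by Hermite-polynomial decomposition or by the Bakry--\'Emery criterion applied to the log-concave Gaussian $\mu$. Setting $h = q/\sqrt{\mu}$ and identifying the right-hand side with the Dirichlet form of the previous paragraph translates the Poincar\'e inequality directly into $\|(I-P)q\|_{\CH}^2 \leq C\,\langle -Lq, q\rangle$, which is the desired gap.

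The weighted coercivity \eqref{2.2} is the main obstacle. My plan is to expand the Dirichlet energy by writing $\sqrt{\mu}\,\nabla_v(q/\sqrt{\mu}) = \nabla_v q + \tfrac{\beta}{2} v\, q$, squaring, and integrating. The cross term $\beta\, q\, v\cdot\nabla_v q = \tfrac{\beta}{2}\,v\cdot\nabla_v(q^2)$ is handled by a single integration by parts, producing
\begin{align*}
\langle -Lq, q\rangle = \int_{\R^3}|\nabla_v q|^2\,\dd v + \f{\beta^2}{4}\int_{\R^3}|v|^2 q^2\,\dd v - \f{3\beta}{2}\int_{\R^3}q^2\,\dd v.
\end{align*}
This identity already exhibits the two positive quantities on the right of \eqref{2.2} (since $\nu = 1+|v|^2$). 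The remaining task is to absorb the negative $L^2$ contribution: I would apply the identity to $(I-P)q$, use the fact that $P$ commutes with $L$ (both annihilate $\sqrt{\mu}$), and then invoke the spectral gap from the previous paragraph to dominate $\int|(I-P)q|^2\,\dd v$ by a small fraction of $\langle -L(I-P)q, (I-P)q\rangle$. The delicate bookkeeping of the constants after this absorption — ensuring that the resulting $c_1$ is strictly positive uniformly in $q$ — is where the argument requires the most care.
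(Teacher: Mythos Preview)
The paper does not prove this lemma; it is stated as a recollection of known properties of the Fokker-Planck operator, with references to \cite{AGGMMMS,DFT,HJ}. Your proposal therefore goes further than the paper, and the argument you outline is the standard one and is correct.

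A few remarks. The identity you derive for item~3 is equivalent to the pointwise formula $-L=-\Delta_v+\tfrac{\beta^2|v|^2}{4}-\tfrac{3\beta}{2}$, which the paper itself invokes (without derivation) later in the proof of Lemma~\ref{lm3.1}. The absorption step you flag as delicate is in fact routine: since $L$ annihilates $\sqrt{\mu}$ and is self-adjoint, $\langle -Lf,f\rangle=\langle -L(I-P)f,(I-P)f\rangle$, so one may assume $Pf=0$; the Poincar\'e gap then gives $\tfrac{3\beta}{2}\|f\|_{\CH}^2\le C\langle -Lf,f\rangle$, and adding this to your identity yields
\[
(1+C)\,\langle -Lf,f\rangle \;\ge\; \|\nabla_v f\|_{\CH}^2+\tfrac{\beta^2}{4}\|\,|v|f\,\|_{\CH}^2,
\]
from which \eqref{2.2} follows immediately, with no smallness or interpolation needed. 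Finally, your computation for item~2 producing eigenvalue $\beta$ rather than $1$ is correct; this is a normalization slip in the lemma as stated (and is consistent with the harmonic-oscillator spectrum of $-\Delta_v+\tfrac{\beta^2|v|^2}{4}$).
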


Now we turn to constructing the linear growing modes $\mathbf{g}=(g_1,g_2)$ which satisfy $\CL\mathbf{g}=-\lambda\mathbf{g}$, with $\lambda>0$ and the following symmetry condition:
\begin{align}
g_1(x,v_1,\xi)=g_2(-x,-v_1,\xi),\quad \xi=(v_2,v_3).\nonumber
\end{align}
To do this, we first seek a linear growing mode of the form $g_1=g_2$ where the function $g_1=e^{ikx}q(v_1,\xi)$ is periodic in $x$. Then the linear system \eqref{2.1-1} reduces to the following eigenvalue problem
\begin{align}\label{2.4}
(\lambda+ikv_1)q-ik\beta\hat{U}(k)(\int_{\mathbb{R}^3}q\sqrt{\mu}\dd v)v_1\sqrt{\mu}=Lq.
\end{align}
Here we have denoted that $\hat{U}(k):=\int_{\mathbb{R}}U(y)e^{-ik y}\dd y.$  Once \eqref{2.4} is solved, by using the rotation invariance of the Fokker-Planck operator and the symmetry of potential $U$, it can be checked directly that $e^{-ikx}q(-v_1,\xi)$ also satisfies \eqref{2.4}, with the same $\lambda$. Then let $$\mathbf{\hat{g}}(x,v)=(e^{ikx}q(v_1,\xi)+e^{-ikx}q(-v_1,\xi),e^{ikx}q(v_1,\xi)+e^{-ikx}q(-v_1,\xi)).$$
Clearly, $\mathbf{\hat{g}}$ is our desire. \\

Now we go back to \eqref{2.4}. Notice that when $k=0$, $\lambda=0$ is a single eigenvalue with eigenvector $\sqrt{\mu(v)}$. It is quite natural to seek an unstable eigenvalue at the lower frequency regime. To do this, we formally expand the eigenvalue $\lambda(k)$ and eigenfunction $q(k)$ as the following power series of $k$:
\begin{align}
\lambda(k)=\lambda_0+k \lambda_1+k^2\lambda_2+\cdots,\quad q(k)=q_0+kq_1+k^2q_2+\cdots.\nonumber
\end{align}
Then plug these into \eqref{2.4} and equate the same powers of $k$ to obtain:
\begin{align}
k^0&: \lambda_0q_0=L q_0,\label{2.5}\\
k^1&: (\lambda_1+iv_1)q_0+\lambda_0q_1-i\beta \hat{U}(0)\left(\int_{\mathbb{R}^3}q_0\sqrt{\mu}\dd v\right)v_1\sqrt{\mu}=Lq_1,\label{2.6}\\
k^2&:\lambda_2 q_0+\lambda_0q_2+(\lambda_1+iv_1)q_1-i\beta \hat{U}(0)\left(\int_{\mathbb{R}^3}q_1\sqrt{\mu}\dd v\right)v_1\sqrt{\mu}\nonumber\\
&\qquad-i\beta\hat{U}'(0)\left(\int_{\mathbb{R}^3}q_0\sqrt{\mu}\dd v_1\right)v_1\sqrt{\mu}=Lq_2,\label{2.7}\\
\cdots&.\nonumber
\end{align}
From \eqref{2.5}, we have $\lambda_0=0$ and
\begin{align}
q_0=\left(\int_{\mathbb{R}^3}q_0\sqrt{\mu}\dd v\right)\sqrt{\mu}.\label{2.8}
\end{align}
Without loss of generality, we assume that $\int_{\mathbb{R}^3}q_0\sqrt{\mu}\dd v=1$. Substituting \eqref{2.8} into \eqref{2.6}, we have
\begin{align}
Lq_1=\lambda_1\sqrt{\mu}+i(1-\beta\hat{U}(0))v_1\sqrt{\mu}=\lambda_1\sqrt{\mu}+i(1-\beta)v_1\sqrt{\mu},\nonumber
\end{align}
which implies that $\lambda_1=0$. Then using Lemma \ref{lm2.0}, we can solve, modulo a multiple of $\sqrt{\mu}$, that
\begin{align}
q_1=i(\beta-1)v_1\sqrt{\mu}.\label{2.9}
\end{align}
Then substitute \eqref{2.8} and \eqref{2.9} into \eqref{2.7} to get
\begin{align}
\left(\lambda_2-(\beta-1)v_1^2\right)\sqrt{\mu}=Lq_2.\label{2.10}
\end{align}
Here we have used the fact that $\hat{U}'(0)=0$, which is a consequence of the evenness of $U$. By projecting \eqref{2.10} onto $\sqrt{\mu}$, we obtain the following approximate dispersive relation (up to $k^2$) with respect to \eqref{2.4}:
\begin{align}
\lambda_2=\f{\beta-1}{\beta}>0\nonumber
\end{align}
for $\beta>1$. From \eqref{2.10}, we can also solve, modulo a multiple of $\sqrt{\mu}$, that
\begin{align}\label{2.11-1}
q_2=\lambda_2L^{-1}\{(1-\beta v_1^2)\sqrt{\mu}\}.
\end{align}
It is natural to take $[\f{\beta-1}{\beta}k^2, q_0+kq_1+k^2q_2]$ as the approximation of $[\lambda(k),q(k)]$ up to the second order. To justify such an approximation, we seek for the solution to \eqref{2.4} in the form:
 $$\lambda(k)=\f{\beta-1}{\beta}k^2+k^3\lambda_R(k),\quad \text{and}\quad q(k)=q_0+kq_1+k^2q_2+k^3q_R(k),$$
where $q_0$, $q_1$ and $q_2$ are given by \eqref{2.8}, \eqref{2.9} and \eqref{2.11-1} respectively. Then the equation of $q_R$ reads as
\begin{align}\label{2.12}
-Lq_R=&-\lambda_R\{q_0+kq_1+k^2q_2+k^3q_R\}-iv_1(q_2+kq_R)\nonumber\\
&-\f{\beta-1}{\beta}\{q_1+kq_2+k^2q_R\}+i\beta k^{-2}\{\hat{U}(k)-\hat{U}(0)-\hat{U}'(0)k\}v_1\sqrt{\mu},
\end{align}
supplemented with the orthogonal condition
\begin{align}\label{2.13}
\int_{\mathbb{R}^3}q_R\sqrt{\mu}\dd v=0.
\end{align}
Project \eqref{2.12} onto $\sqrt{\mu}$ to obtain the following equation of $\lambda_R(k)$:
\begin{align}\label{2.14}
\lambda_R(k)+i\int_{\mathbb{R}^3}v_1q_2\sqrt{\mu}\dd v+ik\int_{\mathbb{R}^3}v_1\sqrt{\mu}(I-P)q_R\dd v=0,
\end{align}
where the projection $P$ is defined in \eqref{P}. Next lemma gives the solvability of the remainder system \eqref{2.12}, \eqref{2.13} and \eqref{2.14}.
\begin{lemma}\label{lm1.1}
Assume $\beta>1$. There exists a positive constant $k_0$, such that for any $0<|k|\leq k_0$, the system \eqref{2.12}, \eqref{2.13}, \eqref{2.14} admits a unique solution $[q_R,\lambda_R]$. Moreover, it holds that
\begin{align}
q_R\in {\mathcal{H}}^1(\mathbb{R}^3)\cap\CH_{\nu}(\mathbb{R}^3)\text{ and } |\lambda_R(k)|\leq C.\nonumber
\end{align}
Here the constant $C>0$ is independent of $k$.
\end{lemma}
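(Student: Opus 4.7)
The plan is to solve the coupled remainder system by iteration, exploiting the spectral gap of $-L$ on $[\ker(-L)]^{\perp}$ together with the smallness of $|k|$. Denote the right-hand side of \eqref{2.12} by $\mathcal{R}[\lambda_R, q_R]$, an affine functional of $(\lambda_R, q_R)$. Starting from $(\lambda_R^0, q_R^0) = (0,0)$, I would define the iteration
\begin{align*}
\lambda_R^{n+1} &:= -i\int_{\mathbb{R}^3} v_1 q_2 \sqrt{\mu}\,dv - ik\int_{\mathbb{R}^3} v_1 \sqrt{\mu}\,(I-P)q_R^n\,dv, \\
-L q_R^{n+1} &:= (I-P)\,\mathcal{R}[\lambda_R^{n+1}, q_R^n], \qquad P q_R^{n+1} = 0.
\end{align*}
The first line is exactly \eqref{2.14} with $q_R$ replaced by $q_R^n$, and one checks using $\langle q_0, \sqrt{\mu}\rangle = 1$, $\langle q_1, \sqrt{\mu}\rangle = \langle q_2, \sqrt{\mu}\rangle = 0$, $Pq_R^n = 0$ and $\hat{U}'(0) = 0$ that this choice makes $P\mathcal{R}[\lambda_R^{n+1}, q_R^n] = 0$; the second line is therefore solvable on $[\ker(-L)]^{\perp}$ by Lemma \ref{lm2.0}(1), and the constraint $Pq_R^{n+1}=0$ propagates, so \eqref{2.13} will hold at the limit.

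Next I would establish uniform-in-$n$ bounds. Testing $-Lq_R^{n+1}=(I-P)\mathcal{R}^n$ against $q_R^{n+1}$ and using \eqref{2.2} yields
\[
c_1\bigl(\|\nu^{1/2} q_R^{n+1}\|_{\mathcal{H}(\mathbb{R}^3)}^2 + \|\nabla_v q_R^{n+1}\|_{\mathcal{H}(\mathbb{R}^3)}^2\bigr) \leq \|\nu^{-1/2} \mathcal{R}^n\|_{\mathcal{H}(\mathbb{R}^3)}\,\|\nu^{1/2} q_R^{n+1}\|_{\mathcal{H}(\mathbb{R}^3)},
\]
so the $\mathcal{H}^1(\mathbb{R}^3) \cap \mathcal{H}_\nu(\mathbb{R}^3)$-norm of $q_R^{n+1}$ is bounded by $C\|\nu^{-1/2}\mathcal{R}^n\|_{\mathcal{H}(\mathbb{R}^3)}$. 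Since $q_0$, $q_1$, $q_2$ and $v_1\sqrt{\mu}$ are all rapidly decaying and $|k|^{-2}\bigl|\hat{U}(k)-\hat{U}(0)-\hat{U}'(0)k\bigr|$ is uniformly bounded for $|k|\le k_0$ by Taylor's theorem applied to the smooth function $\hat{U}$, the $q_R^n$-independent part of $\mathcal{R}^n$ contributes an $O(1+|\lambda_R^{n+1}|)$ constant, while every term in $\mathcal{R}^n$ multiplying $q_R^n$ (namely $-\lambda_R^{n+1}k^3 q_R^n$, $-ikv_1 q_R^n$, $-\tfrac{\beta-1}{\beta}k^2 q_R^n$) carries at least one factor of $k$, with the velocity weight $v_1$ absorbed by $\nu^{-1/2}$ since $|v_1|\le \nu^{1/2}$. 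An induction then gives $\|q_R^n\|_{\mathcal{H}^1 \cap \mathcal{H}_\nu} \le C$ and $|\lambda_R^n|\le C$ uniformly in $n$ provided $|k|\le k_0$ is small enough.

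Finally, I would show that the iteration contracts. From the dispersion formula, $|\lambda_R^{n+1} - \lambda_R^n| \le C|k|\,\|q_R^n - q_R^{n-1}\|_{\mathcal{H}(\mathbb{R}^3)}$, and the same coercivity estimate applied to $-L(q_R^{n+1}-q_R^n) = (I-P)(\mathcal{R}^n-\mathcal{R}^{n-1})$ yields
\[
\|q_R^{n+1} - q_R^n\|_{\mathcal{H}^1 \cap \mathcal{H}_\nu} \le C\,|\lambda_R^{n+1} - \lambda_R^n| + C|k|\,\|q_R^n - q_R^{n-1}\|_{\mathcal{H}^1 \cap \mathcal{H}_\nu}.
\]
For $|k|\le k_0$ small enough the composite constant is strictly less than one, so the iteration converges to a fixed point $(q_R, \lambda_R)$ in $\bigl(\mathcal{H}^1(\mathbb{R}^3)\cap\mathcal{H}_\nu(\mathbb{R}^3)\bigr) \times \mathbb{C}$ satisfying \eqref{2.12}--\eqref{2.14}, with the claimed bound $|\lambda_R|\le C$. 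Uniqueness follows by applying the same contraction estimate to the difference of any two hypothetical solutions.

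The main obstacle to close the estimate is handling the unbounded velocity multiplier in the term $-iv_1 q_R$ of $\mathcal{R}$: a naive energy estimate in $\mathcal{H}$ alone cannot absorb it. The decisive point is that the Fokker--Planck spectral gap in Lemma \ref{lm2.0} controls the weighted norm $\|\nu^{1/2}(I-P)q_R\|_{\mathcal{H}(\mathbb{R}^3)}$, so that its dual $\|\nu^{-1/2}\cdot\|_{\mathcal{H}(\mathbb{R}^3)}$ is available to dominate $v_1 q_R$ and to extract the small factor of $|k|$ needed for contraction. This explains why the analysis must be carried out in $\mathcal{H}^1\cap\mathcal{H}_\nu$ rather than in $\mathcal{H}$, and it justifies the regularity statement of the lemma.
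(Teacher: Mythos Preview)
Your proposal is correct and follows essentially the same approach as the paper: an iteration scheme on $(\lambda_R,q_R)$ that uses the solvability condition \eqref{2.14} to keep the right-hand side of \eqref{2.12} in $[\ker(-L)]^\perp$, then the coercivity \eqref{2.2} and the smallness of $|k|$ to obtain uniform bounds and a contraction in $\mathcal{H}^1\cap\mathcal{H}_\nu$. The only cosmetic differences are the order in which you update $\lambda_R$ and $q_R$ within a step and your use of the dual weight $\nu^{-1/2}$ in place of the paper's direct Cauchy--Schwarz splitting; neither affects the argument.
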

 \begin{proof}
The solution is constructed in terms of the following iteration scheme:
\begin{equation}\label{2.15}
\left\{
\begin{aligned}
-Lq_R^{n+1}=&-\lambda_R^{n}\{q_0+kq_1+k^2q_2+k^3q_R^{n}\}-iv_1(q_2+kq_R^{n})\\
&-\f{\beta-1}{\beta}\{q_1+kq_2+k^2q_R^n\}+i\beta k^{-2}\{\hat{U}(k)-\hat{U}(0)-\hat{U}'(0)k\}v_1\sqrt\mu,
\\
\lambda_R^{n+1}=&-i\int_{\mathbb{R}^3}v_1q_2\sqrt{\mu}\dd v-ik\int_{\mathbb{R}^3}v_1\sqrt{\mu}(I-P)q_{R}^{n+1}\dd v,\\
\end{aligned}\right.
\end{equation}
with the orthogonal condition $\int_{\mathbb{R}^3}q^{n+1}_R\sqrt{\mu}\dd v=0$ and initial datum
\begin{align}
&\lambda_R^0=-i\int_{\mathbb{R}^3}v_1q_2\sqrt{\mu}\dd v,\quad q^0_{R}=0.\nonumber
\end{align}
It is straightforward to verify that, for each $n\geq 0$, the R.H.S of the first equation in \eqref{2.15} falls in $\left[Ker(-L)\right]^{\perp}$. So that the spectral gap of the Fokker-Planck operator (see Lemma \ref{lm2.0}) guarantees the existence of $\{q_{R}^{n}\}_{n\geq1}$ in $\left[Ker(-L)\right]^{\perp}\subseteq\CH$. It remains to establish some uniform estimates on $q_R^n$ and $\lambda_R^n$ and their convergence.\\

\underline{Uniform estimate: } Taking the inner product of \eqref{2.15} with $q_R^{n+1}$, we get
\begin{align}\label{2.16}
&\langle-Lq_R^{n+1},q_R^{n+1}\rangle_{\CH(\mathbb{R}^3)}\nonumber\\
&\quad=\langle-\lambda_R^n\{q_0+kq_1+k^2q_2+k^3q_R^n\}, q_R^{n+1}\rangle_{\CH(\mathbb{R}^3)}\nonumber\\
&\quad\quad-\f{\beta-1}{\beta}\langle q_1+kq_2+k^2q_R^n, q_R^{n+1}\rangle_{\CH(\mathbb{R}^3)}+\langle -iv_1(q_2+kq_R^n),q_R^{n+1}\rangle_{\CH(\mathbb{R}^3)}\nonumber\\
&\quad\quad+\langle i\beta k^{-2}\{\hat{U}(k)-\hat{U}(0)-\hat{U}'(0)k\}v_1\sqrt{\mu},q_R^{n+1}\rangle_{\CH(\mathbb{R}^3)}.
\end{align}
Since $$\int_{\mathbb{R}^3}q^{n+1}_R\sqrt{\mu}\dd v=0,$$
then from the coercivity estimate \eqref{2.2}, it holds that
\begin{align}\label{2.17}
\langle-Lq_R^{n+1},q_R^{n+1}\rangle_{\mathcal{H}(\mathbb{R}^3)}\geq c_1\{|\nabla_vq_{R}^{n+1}|_{\mathcal{H}(\mathbb{R}^3)}^2+|\nu^{1/2} q_R^{n+1}|_{\mathcal{H}(\mathbb{R}^3)}^2\},
\end{align}
for some positive constants $c_1>0$. Notice that
$$
k^{-2}|\hat{U}(k)-\hat{U}(0)-k\hat{U}'(0)|\leq \f{|\hat{U}''|_{L^{\infty}}}{2}\leq C.
$$
Then by Cauchy-Schwarz, the R.H.S of \eqref{2.16} is bounded by
\begin{align}
&\f{c_1}{2}|q_R^{n+1}|_{\CH(\mathbb{R}^3)}^2+Ck^2\{|\lambda_R^n|^2+1\}|\nu^{1/2} q_R^{n}|_{\CH(\mathbb{R}^3)}^2\nonumber\\
&\quad+C\{|\lambda_R^n|^2+1\}\cdot\{|q_0|_{\CH(\mathbb{R}^3)}^2+|q_1|_{\CH(\mathbb{R}^3)}^2+|\nu^{1/2} q_2|_{\CH({\mathbb{R}^3})}^2+1\},\nonumber\\
&\leq \f{c_1}{2}|q_R^{n+1}|_{\CH(\mathbb{R}^3)}^2+Ck^2\{|\lambda_R^n|^2+1\}|\nu^{1/2} q_R^{n}|_{\CH(\mathbb{R}^3)}^2+C\{|\lambda_R^n|^2+1\},\nonumber
\end{align}
for some positive constant $C$ depending only on $c_1$. Combine this with \eqref{2.16} and \eqref{2.17} to obtain, for some positive constant $\hat{C}_0$ independent of $k$ and $n$, that
\begin{align}\label{2.18}
|\nabla_vq_{R}^{n+1}|_{\mathcal{H}(\mathbb{R}^3)}^2+|\nu^{1/2} q_R^{n+1}|_{\mathcal{H}(\mathbb{R}^3)}^2\leq \hat{C}_0k^2\{|\lambda_R^n|^2+1\}|\nu^{1/2} q_R^{n}|_{\CH(\mathbb{R}^3)}^2+\hat{C}_0\{|\lambda_R^n|^2+1\}.
\end{align}
We also have, from the second equation of \eqref{2.15}, that
\begin{align}\label{2.19}
|\lambda_{R}^{n+1}|\leq \big|\int_{\mathbb{R}^3}v_1\sqrt{\mu}q_2\dd v\big|+k\big|\int_{\mathbb{R}^3}v_1\sqrt{\mu}(I-P)q_R^{n+1}\dd v\big|\leq \hat{C}_1+\f{k}{\sqrt{\beta}}|q_R^{n+1}|_{\CH(\mathbb{R}^3)},
\end{align}
where the constant $\hat{C}_1>0$ is also independent of $k$ and $n$. Now we use an induction argument to show that for each $n\geq 0$, $q_R^{n}\in \CH^1\cap\CH_{\nu}$,
\begin{align}\label{2.20}
|\lambda_R^n|\leq 2\hat{C}_1\text{ and } |\nabla_vq_{R}^{n}|_{\mathcal{H}(\mathbb{R}^3)}^2+|\nu^{1/2} q_R^{n}|_{\mathcal{H}(\mathbb{R}^3)}^2\leq 2\hat{C}_0(4\hat{C}_1^2+1),
\end{align}
provided that $0<|k|\leq k_0$ for $k_0$ suitably small. Notice that \eqref{2.20} holds for $n=0$. Assume it holds for $n=N$, then applying \eqref{2.18} to $q^{N+1}_R$, we have
\begin{align}\label{2.21}
|\nabla_vq_{R}^{N+1}|_{\mathcal{H}(\mathbb{R}^3)}^2+|\nu^{1/2} q_R^{N+1}|_{\mathcal{H}(\mathbb{R}^3)}^2&\leq 2\hat{C}_0^2k^2(4\hat{C}_1^2+1)^2+\hat{C}_0(4\hat{C}^2_1+1)\nonumber\\
&\leq \hat{C}_0(4\hat{C}^2_1+1)\cdot\{1+2\hat{C}_0(4\hat{C}_1^2+1)k^2\}.
\end{align}
Substituting \eqref{2.21} into \eqref{2.19}, we have
\begin{align}\label{2.21-1}
|\lambda_R^{N+1}|\leq \hat{C}_1+\f{k}{\sqrt{\beta}}\left(\hat{C}_0(4\hat{C}^2_1+1)\cdot\{1+2\hat{C}_0(4\hat{C}_1^2+1)k^2\}\right)^{1/2}.
\end{align}
Let $0<k_0\leq \min\left\{\sqrt{\f{1}{2\hat{C}_0(4\hat{C}_1^2+1)}}, \hat{C}_1\sqrt{\f{\beta}{2\hat{C}_0(4\hat{C}_1^2+1)}}\right\}$. Then it is straightforward to verify, from \eqref{2.21} and \eqref{2.21-1} that \begin{align}
|\lambda_R^{N+1}|\leq 2\hat{C}_1\text{ and } |\nabla_vq_{R}^{N+1}|_{\mathcal{H}(\mathbb{R}^3)}^2+|\nu^{1/2} q_R^{N+1}|_{\mathcal{H}(\mathbb{R}^3)}^2\leq 2\hat{C}_0(4\hat{C}_1^2+1).\nonumber
\end{align}
This has justified the validity of estimate \eqref{2.20} for $N+1$. To show the convergence, consider the difference $q_{R}^{n+1}-q_{R}^n$. It is direct to see that $q_{R}^{n+1}-q_R^n$ solves
\begin{align}
-L(q_{R}^{n+1}-q_R^n)=&-(\lambda_R^n-\lambda_R^{n-1})\cdot(q_0+kq_1+k^2q_2+k^3q_{R}^n)\nonumber\\
&-k^3\lambda_R^{n-1}
(q_R^n-q_R^{n-1})-iv_1k(q_R^n-q_R^{n-1})-\f{k^2(\beta-1)}{\beta}(q_R^{n}-q_R^{n-1}).\nonumber
\end{align}
Notice that $\lambda_R^n-\lambda_R^{n-1}=-ik\int_{\mathbb{R}^3}v_1\sqrt{\mu}(I-P)(q_R^n-q_R^{n-1})\dd v$. Then using the same energy method, we get, for some constant $\hat{C}_2>0$ independent of $k$ and $n$, that
\begin{align}
|\nabla(q_{R}^{n+1}-q_R^n)|_{\CH(\mathbb{R}^3)}^2+|\nu^{1/2}(q_{R}^{n+1}-q_R^n)|_{\CH(\mathbb{R}^3)}^2\leq \hat{C}_2k|\nu^{1/2}(q_R^n-q_R^{n-1})|_{\CH(\mathbb{R}^3)}^2.\nonumber
\end{align}
We further take $0<k_0\leq \f{1}{2\hat{C}_2}$. Then for any $0<k\leq k_0$, $\{q_R^n\}_{\geq 1}$ is a Cauchy sequence in $\CH_{1}\cap\CH_{\nu}$. The solution pair $(q_R,\lambda_R)$ is obtained by passing to the limit $n\rightarrow+\infty$. The uniqueness is standard. This completes the proof of Lemma \ref{lm1.1}.
\end{proof}
From Lemma \ref{lm1.1}, we can directly obtain the following result of linear instability.
\begin{proposition}\label{prop1.1}
Assume $\beta>1$. There exists a positive constant $k_0$, such that for any $0<k\leq k_0$, the linear problem \eqref{2.4} has a positive eigenvalue $\lambda(k)$ with multiplicity 1. Moreover, the eigenvalue $\lambda(k)$ and the eigenvector $q(k)$ (normalized by $\int_{\mathbb{R}^3}q(k)\sqrt{\mu}\dd v=1$) have the following asymptotical structure:
\begin{align}
\lambda(k)=\f{\beta-1}{\beta}k^2+k^3\lambda_R(k),\nonumber
\end{align}
and
\begin{align}
q(k)=\sqrt{\mu}+ik(\beta-1)v_1\sqrt{\mu}+\f{k^2(\beta-1)}{\beta}L^{-1}\left(\{1-\beta v_1^2\}\sqrt{\mu}\right)+k^3q_{R}(k).\nonumber
\end{align}
Here $\int_{\mathbb{R}^3}q_R\sqrt{\mu}\dd v=0$ and $|q_R(k)|_{\CH^1}+|q_R(k)|_{\CH_\nu}+|\lambda_R(k)|\leq C$ for some constants $C>0$ independent of $k$.
\end{proposition}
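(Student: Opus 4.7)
My plan is to derive Proposition \ref{prop1.1} as an essentially direct consequence of Lemma \ref{lm1.1}. The ansatz
$$\lambda(k) = \tfrac{\beta-1}{\beta}k^2 + k^3 \lambda_R(k), \qquad q(k) = q_0 + k q_1 + k^2 q_2 + k^3 q_R(k),$$
with $q_0, q_1, q_2$ given by \eqref{2.8}, \eqref{2.9}, \eqref{2.11-1}, was set up precisely so that substituting into \eqref{2.4} and matching orders as in the formal expansion \eqref{2.5}--\eqref{2.7} reduces the eigenproblem to the remainder system \eqref{2.12}--\eqref{2.14}. Lemma \ref{lm1.1} delivers a solution $(q_R(k), \lambda_R(k))$ for $0 < |k| \leq k_0$ satisfying $|q_R|_{\CH^1} + |q_R|_{\CH_\nu} + |\lambda_R| \leq C$, so the existence of the eigenpair and the claimed asymptotic structure are immediate.

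Next I would verify normalization and positivity. The identity $\int q(k)\sqrt{\mu}\,dv = 1$ reduces termwise: $q_0$ contributes $1$ by \eqref{2.8}; $q_1 = i(\beta-1)v_1\sqrt{\mu}$ contributes $0$ by oddness in $v_1$; $q_2$ contributes $0$ since it lies in $[\mathrm{Ker}(-L)]^\perp$ by construction \eqref{2.11-1}; and $q_R$ contributes $0$ by \eqref{2.13}. For positivity, the uniform bound $|\lambda_R(k)| \leq C$ together with $\beta > 1$ gives $\lambda(k) \geq \tfrac{\beta-1}{\beta}k^2 - C|k|^3 > 0$ once $k_0$ is further reduced below $(\beta-1)/(\beta C)$.

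The one point needing slightly more care is the multiplicity-one claim. At $k = 0$ the eigenproblem reduces to $\lambda q = Lq$, and by Lemma \ref{lm2.0} the value $0$ is a simple isolated eigenvalue of $-L$, separated from the rest of the spectrum by a spectral gap. Since the linear operator appearing in \eqref{2.4} depends analytically on $k$ and the $k$-dependent perturbation is relatively bounded with respect to $L$, Kato's analytic perturbation theory for closed operators shows that the simple isolated eigenvalue $0$ at $k=0$ persists as a simple eigenvalue curve $\lambda(k)$ for small $k$. The uniqueness clause of Lemma \ref{lm1.1} then pins down $(\lambda(k), q(k))$ as the unique eigenpair with our normalization bifurcating from $(0, \sqrt{\mu})$. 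I do not anticipate a genuine obstacle here: the substantive analysis — uniformly solving the nonlinear remainder system via the iteration \eqref{2.15} — has already been completed in Lemma \ref{lm1.1}, and the present proposition amounts to repackaging that output together with short verifications of positivity, normalization, and simplicity.
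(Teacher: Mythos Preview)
Your proposal is correct and takes essentially the same approach as the paper, which simply states that Proposition~\ref{prop1.1} follows directly from Lemma~\ref{lm1.1} without further argument. In fact you supply more detail than the paper does: the explicit checks of normalization, positivity of $\lambda(k)$ for small $k$, and the Kato-type perturbation argument for simplicity are all left implicit in the paper.
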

\section{Nonlinear instability}
In this section, we will show that the linear stability indeed leads to a nonlinear stability. Define $\mathcal{H}(\Omega\times\mathbb{R}^3)=L^2(\Omega\times\mathbb{R}^3)$, equipped with the inner product
\begin{align}
\langle f,g\rangle_{\mathcal{H}(\Omega\times\mathbb{R}^3)}=\int_{\Omega}\int_{\mathbb{R}^3}f\b{g}\dd x\dd v.\nonumber
\end{align}
In what follows, $\mathcal{H}(\Omega\times\mathbb{R}^3)$ is denoted by $\mathcal{H}$ for short.
We consider the subspace
$$\CM:=\{\mathbf{g}=(g_1,g_2)\text{ }|g_1(x,v_1,\xi)=g_2(-x,-v_1,\xi)\}\subseteq \mathcal{H}\times \mathcal{H},$$
equipped with the standard inner product
$$
\langle \mathbf{f},\mathbf{g}\rangle_{\CM}=\langle f_1,g_1\rangle_{\mathcal{H}}+\langle f_2,g_2\rangle_{\mathcal{H}}.
$$
Recall the linearized VFP operator $\CL$ \eqref{2.1-1}. Next lemma gives the existence of a dominating eigenvalue of $-\CL$.
\begin{lemma}\label{lm3.0}
\cite{EGM1} Assume $\beta>1$. Then for all $\zeta>0$, the spectrum of $-\CL$ in $\{Re\lambda>\zeta\}$ consists of a finite number of eigenvalues of finite multiplicity (non-empty by Proposition \ref{prop1.1}). Let $\lambda_1$ be the eigenvalue with maximal real part. Then for any $\Lambda>Re\lambda_1$, there exists a positive constant $C_{\Lambda}$, such that
\begin{align}\label{3.0}
\|e^{-t\CL}\mathbf{g}_0\|_{\CM}\leq C_{\Lambda}e^{\Lambda t}\|\mathbf{g}_0\|_{\CM}.
\end{align}
\end{lemma}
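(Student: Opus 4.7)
The statement is essentially from \cite{EGM1}; the plan is to split
\[
\CL = \CL_0 - \CK,\qquad (\CL_0\mathbf{g})_i = v_1\pa_x g_i-L g_i,\qquad (\CK\mathbf{g})_i=\beta F(\sqrt{\mu}g_{i+1})\,v_1\sqrt{\mu},
\]
treat $\CK$ as a compact perturbation of the diagonal transport--Fokker--Planck operator $\CL_0$, and invoke Vidav's theorem \cite{V}. A short computation using the reflection invariance of $L$ in $v_1$ and the evenness of $U$ confirms that both $\CL_0$ and $\CK$ leave $\CM$ invariant.

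First I would check that $-\CL_0$ generates a contraction semigroup on $\CM$: the transport piece is antisymmetric in $x$ on the periodic slot, while Lemma \ref{lm2.0} gives $\langle-Lg,g\rangle_{\CH(\mathbb{R}^3)}\ge 0$ pointwise in $x$, so $\mathrm{Re}\langle \CL_0\mathbf{g},\mathbf{g}\rangle_{\CM}\ge 0$; by Lumer--Phillips one obtains $\|e^{-t\CL_0}\|_{\CM\to\CM}\le 1$, hence the growth bound of $-\CL_0$ is $\le 0$. Next, $\CK$ factors through the one-dimensional macroscopic quantity $F(\sqrt{\mu}g)(x)=-\pa_x(U\ast m_g)(x)$ with $m_g(x):=\int_{\mathbb{R}^3}\sqrt{\mu}(v)g(x,v)\,\dd v$; since $U\in C^\infty_c(\mathbb{R})$, convolution with $U$ is bounded $L^2_x\to H^1_x$, and in the compact periodic regime relevant to Theorem \ref{thm1.1} the Rellich--Kondrachov embedding $H^1_x\hookrightarrow L^2_x$ is compact. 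Combined with the factorization $(\CK\mathbf{g})_i(x,v)=\phi_{i+1}(x)\,v_1\sqrt{\mu}(v)$ through the fixed profile $v_1\sqrt\mu\in L^2_v$, this yields compactness of $\CK:\CM\to\CM$.

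With these two ingredients, Vidav's theorem produces, for every $\zeta>0$, a finite collection of eigenvalues of $-\CL$ of finite multiplicity in $\{\mathrm{Re}\lambda>\zeta\}$; Proposition \ref{prop1.1} supplies at least one such eigenvalue once $\zeta$ is small, so $\lambda_1$ exists. For \eqref{3.0}, I would fix $\Lambda>\mathrm{Re}\lambda_1$, choose $\Lambda_0\in(\mathrm{Re}\lambda_1,\Lambda)$ so that $-\CL$ has no spectrum in $\{\mathrm{Re}\lambda\ge\Lambda_0\}$, and exploit the resolvent identity
\[
(\lambda+\CL)^{-1}=(\lambda+\CL_0)^{-1}\bigl(I-\CK(\lambda+\CL_0)^{-1}\bigr)^{-1}
\]
together with the compactness of $\CK(\lambda+\CL_0)^{-1}$ and a Neumann/Fredholm argument to obtain uniform boundedness of $(\lambda+\CL)^{-1}$ on $\{\mathrm{Re}\lambda\ge\Lambda_0\}$; the Gearhart--Pr\"uss theorem on the Hilbert space $\CM$ then delivers $\|e^{-t\CL}\|_{\CM\to\CM}\le C_\Lambda e^{\Lambda t}$.

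The main obstacle is precisely this last step: converting discreteness of the unstable spectrum into the sharp exponential growth bound. Discreteness and finiteness follow cleanly from the compact perturbation framework, but uniform control of $(\lambda+\CL)^{-1}$ along the whole line $\mathrm{Re}\lambda=\Lambda_0$ requires quantitative bounds on $(\lambda+\CL_0)^{-1}$ as $|\mathrm{Im}\lambda|\to\infty$, where the transport operator dominates the Fokker--Planck dissipation and one must trade velocity regularity for $x$-oscillation. This is the point at which the unbounded Fokker--Planck structure complicates matters relative to the Boltzmann setting of \cite{EGM1}, and where the bulk of the technical work would reside.
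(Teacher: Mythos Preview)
Your splitting $\CL=\CL_0-\CK$ and the checks that $e^{-t\CL_0}$ is contractive and that $\CK$ is compact are exactly the paper's argument (the paper writes $\CL=\CA+K$ with the same two pieces and records these two facts). Where you diverge is in the final step: you use Vidav only to get discreteness of the spectrum in $\{\mathrm{Re}\lambda>\zeta\}$, and then switch to Gearhart--Pr\"uss for the semigroup bound \eqref{3.0}, flagging the uniform control of $(\lambda+\CL)^{-1}$ along $\mathrm{Re}\lambda=\Lambda_0$ as the hard part.

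That obstacle is self-imposed. The paper invokes Vidav's lemma \cite{V} for \emph{both} conclusions simultaneously, and this is indeed what the result delivers: a compact perturbation of the generator leaves the essential growth bound of the semigroup unchanged. Since $e^{-t\CL_0}$ is contractive, $\omega_{\mathrm{ess}}(e^{-t\CL_0})\le 0$, hence $\omega_{\mathrm{ess}}(e^{-t\CL})\le 0$ as well. General semigroup theory then gives that the spectrum of $-\CL$ in any half-plane $\{\mathrm{Re}\lambda>\zeta\}$, $\zeta>0$, consists of finitely many eigenvalues of finite multiplicity, and that the growth bound of $e^{-t\CL}$ equals $\max(\omega_{\mathrm{ess}},\mathrm{Re}\lambda_1)=\mathrm{Re}\lambda_1$. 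The estimate \eqref{3.0} follows at once from the definition of the growth bound, with no resolvent analysis along vertical lines and no special difficulty from the unboundedness of the Fokker--Planck operator. Your Gearhart--Pr\"uss route could also be closed (compactness of $\CK$ plus the Riemann--Lebesgue lemma for the Laplace representation of $(\lambda+\CL_0)^{-1}$ yields $\|\CK(\lambda+\CL_0)^{-1}\|\to 0$ as $|\mathrm{Im}\lambda|\to\infty$), but it is a detour the paper avoids entirely.
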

\begin{proof}
 We split \begin{align}\label{D}
\CL\mathbf{g}=\CA \mathbf{g}+K\mathbf{g},\end{align}
where
$(\CA\mathbf{g})_i=v_1\pa_xg_i-Lg_i$ and $(K\mathbf{g})_i=F(\sqrt{\mu}g_{i+1})v_1\sqrt{\mu}$. It is straightforward to verify that $e^{-t\CA}$ is contractive in $\CM$
and that $K$ is compact. Then Lemma \ref{lm3.0} follows from Vidav's lemma \cite{V}.
\end{proof}
Next lemma gives the smoothness of the growing modes.
\begin{lemma}\label{lm3.1}
Let $\mathbf{R}=(R_1,R_2)\in \CM$ be the eigenvector of $-\CL$ with $Re\lambda>0$. Then $\mathbf{R}\in C^{\infty}$. Moreover, for any integer $N\geq 0$ and weight $w(v)=e^{\f{q\beta|v|^2}{4}}$ with $0<q\leq 1/2$, we have \begin{align}
\sum_{|\alpha|+|\gamma|\leq N}\|w\pa_{\gamma}^\alpha\mathbf{R}\|_{\CM}\leq C_{N,q}\|\mathbf{R}\|_{\CM}.\label{3.1-1}
\end{align}
\end{lemma}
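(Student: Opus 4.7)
The plan is to prove the weighted bound \eqref{3.1-1} by induction on the total derivative order $|\alpha|+|\gamma|$ through energy estimates against the eigenvalue equation
\begin{align*}
\lambda R_i + v_1\pa_xR_i - LR_i = \beta F(\sqrt{\mu}R_{i+1})v_1\sqrt{\mu},\quad i=1,2,
\end{align*}
using a weighted analogue of the coercivity from Lemma \ref{lm2.0}, and then to deduce $\mathbf{R}\in C^{\infty}$ by Sobolev embedding.

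First I would derive the weighted coercivity. Since $Lf = \Delta_v f + \f{3\beta}{2}f - \f{\beta^2|v|^2}{4}f$ and $\nabla_v w = \f{q\beta v}{2}w$, integration by parts yields
\begin{align*}
\langle -Lf,w^2 f\rangle_{\CH(\mathbb{R}^3)} = \|w\nabla_v f\|_{\CH(\mathbb{R}^3)}^2 + \f{\beta^2(1-2q^2)}{4}\bigl\||v|wf\bigr\|_{\CH(\mathbb{R}^3)}^2 - \f{3\beta(1+q)}{2}\|wf\|_{\CH(\mathbb{R}^3)}^2.
\end{align*}
For $q\leq 1/2$, the coefficient of $\||v|wf\|^2$ is bounded below by $\beta^2/8 >0$, so the quadratic form is coercive outside a fixed ball in $v$, giving the workhorse inequality
\begin{align*}
\langle -Lf,w^2 f\rangle_{\CH(\mathbb{R}^3)} + C_\star\|wf\|_{\CH(\mathbb{R}^3)}^2 \geq c_0\bigl\{\|w\nu^{1/2}f\|_{\CH(\mathbb{R}^3)}^2 + \|w\nabla_v f\|_{\CH(\mathbb{R}^3)}^2\bigr\}.
\end{align*}

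Next I would handle the pure $x$-derivatives. Since $\pa_x$ commutes with $L$, with $v_1\pa_x$, and with the Vlasov coupling (as $F$ depends only on $x$), the function $\pa_x^\alpha R_i$ satisfies the same equation with $R_i$ replaced by $\pa_x^\alpha R_i$. Testing against $w^2\pa_x^\alpha R_i$, the transport term vanishes by spatial periodicity (as $v_1 w^2$ is $x$-independent), and the Vlasov contribution is handled via
\begin{align*}
\bigl|\langle F(\sqrt{\mu}\pa_x^\alpha R_{i+1})v_1\sqrt{\mu},w^2\pa_x^\alpha R_i\rangle\bigr| \leq C\|\pa_x^\alpha R_{i+1}\|_{L^2}\|w\pa_x^\alpha R_i\|_{L^2},
\end{align*}
exploiting the convolution structure of $F$ and the boundedness of $U$. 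Combined with $Re\lambda >0$ and the weighted coercivity, iterating on $|\alpha|$ from the base case $\|\mathbf{R}\|_{\CM}$ yields the estimate for all $(\alpha,0)$. For mixed derivatives with $|\gamma|\geq 1$, I apply $\pa_\gamma^\alpha$ and test against $w^2\pa_\gamma^\alpha R_i$: the commutator $[\pa_{v_j},L]$ produces terms of type $v_j\cdot(\text{lower-order derivatives})$ that are dominated by the weighted $\nu^{1/2}$-dissipation after a small-parameter Young's inequality, while $[\pa_{v_1},v_1\pa_x]=\pa_x$ simply converts a velocity derivative into a spatial one already controlled in the previous round of induction, and $\pa_\gamma^\alpha(v_1\sqrt{\mu})$ remains a rapidly decaying function.

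The main technical obstacle is the careful management of the commutator $[\pa_\gamma^\alpha,L]$ in the presence of the weight: each differentiation of the $-\f{\beta^2|v|^2}{4}$ term in $L$ produces a $|v|$-growing factor that must be paired against the $\||v|w\,\cdot\,\|$ dissipation just established, and it is precisely the threshold $q\leq 1/2$ that prevents these growing terms from overwhelming the coercivity. Once \eqref{3.1-1} is established for every $N$, choosing $N$ large enough and invoking the standard Sobolev embedding $H^N_{x,v}\hookrightarrow C^k$ on the spatially periodic domain yields $\mathbf{R}\in C^{\infty}$.
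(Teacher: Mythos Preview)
Your strategy of running weighted energy estimates directly on the stationary eigenvalue equation is genuinely different from the paper's proof. The paper instead exploits the splitting $\CL=\CA+K$ from \eqref{D} and the Duhamel representation $\mathbf{R}=-\int_0^\infty e^{-\lambda t}e^{-t\CA}K\mathbf{R}\,\dd t$: since $(K\mathbf{R})_i=-\beta F(\sqrt{\mu}R_{i+1})v_1\sqrt{\mu}$ is already $C^\infty$ in $x$ (convolution with smooth $U$) and has Gaussian decay in $v$ (the factor $v_1\sqrt{\mu}$), all one needs is that the kinetic Fokker--Planck semigroup $e^{-t\CA}$ propagates the weighted $H^N$ norm with at most polynomial-in-$t$ growth, after which the factor $e^{-Re\lambda\,t}$ makes the time integral converge. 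That route cleanly decouples the smoothing mechanism (entirely in $K$) from the propagation mechanism (entirely in $e^{-t\CA}$); your route is more elementary in that it avoids any time-dependent estimate, but it forces you to confront the smoothing and the coercivity simultaneously.

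Your outline is essentially workable, but the $x$-derivative step has a gap as written. Your displayed bound leaves $\|\pa_x^\alpha R_{i+1}\|_{L^2}$ on the right-hand side, which is the \emph{same} order as what you are trying to estimate; summing over $i=1,2$ and using $Re\lambda>0$ then yields nothing (one only recovers the triviality $Re\lambda\leq C$), so there is no iteration ``from the base case $\|\mathbf{R}\|_{\CM}$''. The missing observation---precisely the paper's point that $K$ is smoothing---is that all $x$-derivatives can be thrown onto the kernel: since $F(h)=-\pa_x(U\ast\rho_h)$ one has $\pa_x^\alpha F(\sqrt{\mu}R_{i+1})=-(U^{(\alpha+1)}\ast\rho_{R_{i+1}})$, hence $\|\pa_x^\alpha F(\sqrt{\mu}R_{i+1})\|_{L^\infty}\leq C_\alpha\|\mathbf{R}\|_{\CM}$, and the right-hand side drops to $C_\alpha\|\mathbf{R}\|_{\CM}\|w\pa_x^\alpha R_i\|_{L^2}$. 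With this correction the unweighted estimate closes immediately from $Re\lambda\|\pa_x^\alpha\mathbf{R}\|^2\leq C_\alpha\|\mathbf{R}\|\,\|\pa_x^\alpha\mathbf{R}\|$, and the weighted one follows after the interpolation $\|wf\|^2\leq\epsilon\|w\nu^{1/2}f\|^2+C_\epsilon\|f\|^2$, which you should invoke explicitly since $Re\lambda$ can be much smaller than your constant $C_\star$. The induction on $|\gamma|$ (at each stage running over \emph{all} $\alpha$, so that the commutator $[\pa_{v_1},v_1\pa_x]=\pa_x$ lands on quantities already controlled) then goes through as you describe.
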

\begin{proof}
Recall the decomposition $\CL=\CA+K$. By the Duhamel principle, the eigenvector $\mathbf{R}$ can be expressed by
\begin{align}
\mathbf{R}=-\int_0^{\infty} e^{-\lambda t}e^{-t\CA}K\mathbf{R}\dd t.\label{4.1-0}
\end{align}
Notice that $K\mathbf{R}\in C^{\infty}$. It suffices to consider the propagation of regularity by $e^{-t\CA}$. Let $g$ be the solution to the following Cauchy problem of the kinetic Fokker-Planck equation
\begin{align}\label{4.1-1}
\pa_t g+v_1\pa_xg-Lg=0,\quad g|_{t=0}=g_0,
\end{align}
and let $N\geq 0$ be an integer. Take $\pa_\gamma^\alpha$ ($|\alpha|+|\gamma|\leq N$) to \eqref{4.1-1} to get
\begin{align}\label{4.1-2}
\pa_t\pa_\gamma^\alpha g+v_1\pa_{\gamma}^\alpha g-L\pa_\gamma^\alpha g=-\sum_{0<\gamma_1\leq \gamma}\pa_{\gamma_1}v_1\pa_{\gamma-\gamma_1}^{\alpha}g-\sum_{0<\gamma_1\leq \gamma}\pa_{\gamma_1}\left(\f{\beta^2|v|^2}{4}\right)\pa_{\gamma-\gamma_1}^\alpha g.
\end{align}
First consider the pure $x$-derivative case ($\gamma=0$), where R.H.S of \eqref{4.1-2} vanishes. Taking the inner product of \eqref{4.1-2} with $w^2\pa^{\alpha}g$, we get
\begin{align}
\f{1}{2}\f{\dd }{\dd t}\|w\pa^\alpha g\|_{\CH}^2+\langle-L\pa^\alpha g,w^2\pa^\alpha g\rangle_{\CH}=0.\label{4.1-3}
\end{align}
Notice that $-L=-\Delta+\f{\beta^2|v|^2}{4}-\f{3\beta}{2}$. Then from integrating by parts, we have
\begin{align}
\int_{\mathbb{R}^3}w^2\overline{\pa^\alpha g}(-L\pa^\alpha g)\dd v=\int_{\mathbb{R}^3}w^2|\nabla_v \pa^\alpha g|^2\dd v+\int_{\mathbb{R}^3}\bigg\{\f{\beta^2|v|^2}{4}-\f{3\beta}{2}-\f{\nabla_v(w\nabla_vw)}{w^2}\bigg\}|w\pa^{\alpha}g|^2\dd v.\nonumber
\end{align}
A direct computation shows that
$$\f{\beta^2|v|^2}{4}-\f{3\beta}{2}-\f{\nabla_v(w\nabla_vw)}{w^2}=\f{\beta^2|v|^2}{4}-\f{3\beta}{2}-\f{\beta^2q^2|v|^2}{2}-\f{3\beta q}{2}\geq \f{\beta^2|v|^2}{8}-\f{3\beta(q+1)}{2}.
$$ Therefore, we have, from a classical interpolation, that
\begin{align}
\langle-L\pa^\alpha g, w^2\pa^\alpha g\rangle_{\CH}\geq \|w\nabla_v\pa^\alpha g\|_{\CH}^2+\f{\beta^2}{16}\|w \nu^{1/2}\pa^\alpha g\|_{\CH}^2-C_{\beta,q}\|\pa^\alpha g\|_{\CH}^2.\label{4.1-4}
\end{align}
Since $\|\pa^\alpha g\|_{\CH}=\|\pa^\alpha e^{-t\CA}g_0\|_{\CH}=\|e^{-t\CA}\pa^{\alpha}g_0\|_{\CH}=\|\pa^\alpha g_0\|_{\CH}$, then combining \eqref{4.1-4} with \eqref{4.1-3}, we obtain
\begin{align}
&\|w\pa^\alpha g(t)\|_{\mathcal{H}}^2+\int_0^t\|w\nabla_v\pa^\alpha g(s)\|_{\CH}^2+\|\nu^{1/2} w\pa^\alpha g(s)\|_{\CH}^2\dd s\nonumber\\
&\quad\leq C\|w\pa^\alpha g_0\|_{\mathcal{H}}^2+C\int_{0}^t\|\pa^\alpha g(s)\|_{\mathcal{H}}^2\dd s\leq C(t+1)\|w\pa^{\alpha }g_0\|_{\mathcal{H}}^2.\label{4.1-5}
\end{align}
Now consider the estimates involving $v$-derivatives ($\gamma\neq0$). Taking the inner product of \eqref{4.1-2} with $w^2\pa_{\gamma}^\alpha g$, we get
\begin{align}\label{4.1-6}
&\f{1}{2}\f{\dd }{\dd t}\|w\pa^\alpha_\gamma g\|_{\CH}^2+\langle-L\pa^\alpha_\gamma g, w^2\pa_{\gamma}^\alpha g\rangle_{\CH}\nonumber\\
&\quad=-\sum_{0<\gamma_1\leq \gamma}\langle\pa_{\gamma_1}v_1\pa_{\gamma-\gamma_1}^{\alpha}g,w^2\pa_\gamma^\alpha g\rangle_{\CH}-\sum_{0<\gamma_1\leq \gamma}\langle\pa_{\gamma_1}\left(\f{\beta^2|v|^2}{4}\right)\pa_{\gamma-\gamma_1}^\alpha g,w^2\pa_\gamma^\alpha g\rangle_{\CH}.
\end{align}
By Cauchy Schwarz, the R.H.S of \eqref{4.1-6} is
\begin{align}\label{4.1-7}
\leq C\sum_{|\alpha_1|+|\gamma_1|\leq N}\sum_{|\gamma_1|\leq |\gamma|-1}\{\|w\nu^{1/2}\pa^{\alpha_1}_{\gamma_1} g\|_{\CH}^2+\|w\nabla_v\pa^{\alpha_1}_{\gamma_1} g\|_{\CH}^2\}
\end{align}
Similar as \eqref{4.1-4}, we have
\begin{align}
\langle-L\pa^\alpha_\gamma g, w^2\pa^\alpha_\gamma g\rangle_{\CH}&\geq\|w\nabla_v\pa^\alpha_\gamma g\|_{\CH}^2+\f{\beta^2}{16}\|w\nu^{1/2} \pa^\alpha_\gamma g\|_{\CH}^2-C\|\pa^\alpha_\gamma g\|_{\CH}^2\nonumber\\
&\geq \|w\nabla_v\pa^\alpha_\gamma g\|_{\CH}^2+\f{\beta^2}{16}\|w \nu^{1/2}\pa^\alpha_\gamma g\|_{\CH}^2-C\sum_{|\alpha_1|+|\gamma_1|\leq N}\sum_{|\gamma_1|\leq |\gamma|-1}\|\nabla_v\pa^{\alpha_1}_{\gamma_1} g\|_{\CH}^2.\label{4.1-8}
\end{align}
Plug \eqref{4.1-7} and \eqref{4.1-8} back into \eqref{4.1-6} to get
\begin{align}
&\|w\pa^\alpha_\gamma g(t)\|_{\mathcal{H}}^2+\int_0^t\|w\nabla_v\pa^\alpha_\gamma g(s)\|_{\CH}^2+\|\nu^{1/2} w\pa^\alpha_\gamma g(s)\|_{\CH}^2\dd s\nonumber\\
&\quad\leq C\|w\pa^\alpha_\gamma g_0\|_{\mathcal{H}}^2+C\sum_{|\alpha_1|+|\gamma_1|\leq N}\sum_{|\gamma_1|\leq |\gamma|-1}\int_{0}^t\{\|\nu^{1/2} w\pa^{\alpha_1}_{\gamma_1} g(s)\|_{\mathcal{H}}^2+\| w\nabla_v\pa^{\alpha_1}_{\gamma_1} g(s)\|_{\mathcal{H}}^2\}\dd s.\label{4.1-9}
\end{align}
A suitable weighted summation of \eqref{4.1-5} and \eqref{4.1-9} over $|\alpha|+|\gamma|\leq N$ yields that
\begin{align}
\sum_{|\alpha|+|\gamma|\leq N}\|w\pa^\alpha_\gamma g(t)\|_{\mathcal{H}}^2\leq C(1+t)^N\cdot\{\sum_{|\alpha|+|\gamma|\leq N}\|w\pa^\alpha_\gamma g_0\|_{\mathcal{H}}^2\}.\label{4.1-10}
\end{align}
Denote $\|\mathbf{g}\|_{\mathcal{X}}:=\sum_{|\alpha|+|\gamma|\leq N}\|w\pa_\gamma^\alpha\mathbf{g}\|_{\CH}$.
Then from \eqref{4.1-10}, we have
\begin{align}\nonumber
\|e^{-t\CA}\mathbf{g}_0\|_{\mathcal{X}}^2\leq C(t+1)^N\|\mathbf{g}_0\|_{\mathcal{X}}^2.
\end{align}
Combine this with \eqref{4.1-0} to get
\begin{align}
\|\mathbf{R}\|_{\mathcal{X}}&\leq C\int_0^te^{-Re\lambda t}\|e^{-t\CA}K\mathbf{R}\|_{\mathcal{X}}\dd t\leq C\int_0^te^{-Re\lambda t}(1+t)^\f{N}{2}\|K\mathbf{R}\|_{\mathcal{X}}\dd t\nonumber\\
&\leq C_N \|K\mathbf{R}\|_{\mathcal{X}}\leq C\|\mathbf{R}\|_{\CH}.
\end{align}
Here we have used the smoothness of $U$ in the last inequality. This completes the proof of Lemma \ref{lm3.1}.
\end{proof}

Now consider the full nonlinear problem
\begin{equation}\label{3.2-0}\left\{
\begin{aligned}
&\pa_t g_i+v_1\pa_xg_i-\beta F(\sqrt{\mu}g_{i+1})v_1\sqrt{\mu}-Lg_i=-F(\sqrt{\mu}g_{i+1})\pa_{v_1}g_i+\f{\beta}{2}F(\sqrt{\mu}g_{i+1})v_1g_{i},\\
&\mathbf{g}(0)=\mathbf{g}_0.
\end{aligned}\right.
\end{equation}
Next lemma shows that, if there is an exponential growth estimate of the $L^2$-norm of the perturbation, then one can bound the growth of the first order derivatives by the same rate.
\begin{lemma}\label{lm3.2}
Let $T>0$, $w(v)=(1+|v|^2)^{\f{\theta}{2}}$ ($\theta\geq 0$) be the weight function and $\mathbf{g}$ be the solution to the nonlinear equation \eqref{3.2-0} over $[0,T]$. Assume that $Re\lambda>0$ and
\begin{align}\label{3.2-3.1}
\|\mathbf{g}(t)\|_{L^2}\leq Ce^{Re\lambda t}\|\mathbf{g}_0\|_{L^2}, \text{ for } 0\leq t\leq T.
\end{align}
Then there exists a constant $\eta>0$, depending only on $\lambda$, such that if
\begin{align}\label{3.2-3}
\sup_{0\leq t\leq T}\|\mathbf{g}(s)\|_{L^2}\leq \eta,
\end{align}
then it holds that
\begin{align}\label{3.2-1}
\|w\mathbf{g}(t)\|_{L^2}+\|w\pa_{x,v}\mathbf{g}(t)\|_{L^2}\leq Ce^{Re\lambda t}\{\|w\pa_{x,v}\mathbf{g}_0\|_{L^2}+\|w\mathbf{g}_0\|_{L^2}\}.
\end{align}
\end{lemma}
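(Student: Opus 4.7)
The plan is to carry out weighted $L^2$ energy estimates on $\mathbf{g}$ together with each first-order derivative $\pa_x\mathbf{g}$, $\pa_{v_j}\mathbf{g}$ ($j=1,2,3$), combine them into one functional with carefully chosen coefficients, and then apply Gr\"onwall using the hypothesis \eqref{3.2-3.1} as the exponential driver. Concretely I would work with
\[ E(t):=\|w\mathbf{g}(t)\|_{L^2}^2+\|w\pa_x\mathbf{g}(t)\|_{L^2}^2+\kappa\sum_{j=1}^3\|w\pa_{v_j}\mathbf{g}(t)\|_{L^2}^2, \]
where $\kappa>0$ is a small tunable constant whose role is to neutralize the commutator $[\pa_{v_1},v_1\pa_x]$.

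The zero-order step amounts to extending the coercivity computation of Lemma \ref{lm3.1} from the exponential weight to the polynomial weight $w=(1+|v|^2)^{\theta/2}$. Using $-L=-\Delta_v+\tfrac{\beta^2|v|^2}{4}-\tfrac{3\beta}{2}$, integrating by parts, and noting $|\Delta_v(w^2)|/w^2=O((1+|v|^2)^{-1})$, one obtains
\[ -\langle Lg,w^2g\rangle \geq c_0\|w\nabla_v g\|^2 + c_0\|\nu^{1/2}wg\|^2 - C\|g\|_{L^2}^2, \]
the crucial point being that the error $-C\|g\|^2$ is in the \emph{unweighted} norm. Pairing \eqref{3.2-0} with $w^2g_i$: the streaming term vanishes; the linear Vlasov contribution, via $\|F(\sqrt\mu h)\|_{L^2_x}\leq C\|h\|_{L^2}$ (Young's convolution), is $\leq C\|\mathbf{g}\|\|wg_i\|\leq \tfrac{c_0}{4}\|\nu^{1/2}wg_i\|^2+C\|\mathbf{g}\|^2$; and the two nonlinear terms, after integration by parts in $v_1$ (using $\pa_{v_1}F=0$ because $F$ is $v$-independent), reduce to $\|F\|_{L^\infty}\lesssim\|\mathbf{g}\|_{L^2}\leq\eta$ times dissipation-type norms, absorbable for $\eta$ small. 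The upshot, $\tfrac{d}{dt}\|w\mathbf{g}\|^2+c_0\tilde D_0\leq C\|\mathbf{g}\|^2$, integrated against \eqref{3.2-3.1}, yields $\|w\mathbf{g}(t)\|^2\leq Ce^{2Re\lambda t}\|w\mathbf{g}_0\|^2$.

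For $\pa_x$ the same argument applies verbatim (since $\pa_x$ commutes with $v_1\pa_x$ and $L$), the Vlasov linear term now involving $F(\sqrt\mu\pa_xg_{i+1})$. For $\pa_{v_j}$ two commutators intervene,
\[ [\pa_{v_j},v_1\pa_x]=\delta_{j1}\pa_x, \qquad [\pa_{v_j},L]=-\tfrac{\beta^2v_j}{2}, \]
producing additional sources $\delta_{j1}\pa_xg_i$ and $\tfrac{\beta^2v_j}{2}g_i$ in the $\pa_{v_j}$-equation. Paired with $w^2\pa_{v_j}g_i$: the $L$-commutator contributes at most $\|\nu^{1/2}wg\|\|w\pa_{v_j}g\|$, absorbable into the dissipations of the $\mathbf{g}$ and $\pa_{v_j}\mathbf{g}$ equations; but the transport commutator $|\langle\pa_xg_i,w^2\pa_{v_1}g_i\rangle|\leq\|w\pa_xg_i\|\|w\pa_{v_1}g_i\|$ is of the same order as $E$ itself and is the main obstacle. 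The small weight $\kappa$ in $E$ is precisely what defeats it: the contribution to $\tfrac{d}{dt}E$ becomes $\kappa|\langle\pa_xg,w^2\pa_{v_1}g\rangle|\leq\tfrac{\sqrt\kappa}{2}\|w\pa_xg\|^2+\tfrac{\kappa^{3/2}}{2}\|w\pa_{v_1}g\|^2$ by Young's with parameter $\sqrt\kappa$, the first piece absorbed by the full-weight $\pa_x$-dissipation $c_0\|\nu^{1/2}w\pa_x\mathbf{g}\|^2$ and the second by the $\kappa$-weighted $\pa_{v_1}$-dissipation $\kappa c_0\|\nu^{1/2}w\pa_{v_1}\mathbf{g}\|^2$, both under $\sqrt\kappa<c_0$. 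Integration by parts in $v_1$ together with $\|\mathbf{g}\|_{L^2}\leq\eta$ dispatches the derivative-level Vlasov and nonlinear contributions.

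Combining everything yields $\tfrac{d}{dt}E(t)+c_1\tilde D(t)\leq C\|\mathbf{g}(t)\|_{L^2}^2$, which integrates against \eqref{3.2-3.1} to
\[ E(t)\leq E(0)+C\|\mathbf{g}_0\|_{L^2}^2\int_0^t e^{2Re\lambda s}\,ds\leq Ce^{2Re\lambda t}E(0), \]
since $\|\mathbf{g}_0\|^2\leq E(0)$. As $E$ is equivalent to $\|w\mathbf{g}\|^2+\|w\pa_{x,v}\mathbf{g}\|^2$ up to the factor $\kappa$, taking square roots produces \eqref{3.2-1}. The main obstacle throughout is the $[\pa_{v_1},v_1\pa_x]$-commutator, whose source $\pa_xg$ cannot be absorbed by dissipation alone and forces the introduction of the small coefficient $\kappa$ in the combined functional.
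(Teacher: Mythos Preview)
Your overall strategy—weighted energy estimates plus the $\kappa$-device for the transport commutator—is sound in spirit, but there is a genuine gap at the $\partial_x$ level that breaks the final differential inequality. When you apply your weighted coercivity bound to $h=\partial_x g_i$, the lower-order error is $-C\|\partial_x g_i\|_{L^2}^2$, not $-C\|g_i\|_{L^2}^2$: already for $w\equiv1$ one has $-\langle Lh,h\rangle=\|\nabla_vh\|_{L^2}^2+\tfrac{\beta^2}{4}\||v|h\|_{L^2}^2-\tfrac{3\beta}{2}\|h\|_{L^2}^2$, so the loss is in terms of $h$ itself with an $O(1)$ coefficient. Since this term carries no small factor in your functional $E$ and the $\partial_x$-dissipation coefficient $c_0$ is strictly smaller than the error constant, it cannot be absorbed. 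Consequently your claimed inequality $\tfrac{d}{dt}E+c_1\tilde D\leq C\|\mathbf g\|_{L^2}^2$ is really $\tfrac{d}{dt}E+c_1\tilde D\leq C\|\mathbf g\|_{L^2}^2+C\|\partial_x\mathbf g\|_{L^2}^2$, and Gr\"onwall then only gives growth $e^{Ct}$ with $C$ unrelated to $Re\lambda$, which is too fast.

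The paper resolves this by treating the \emph{unweighted} $\partial_x$ estimate first, and differently: it uses the spectral-gap coercivity \eqref{2.2}, which has \emph{no} lower-order error but only dissipates $(I-P)\partial_x g_i$. The dangerous nonlinear term $\tfrac{\beta}{2}\langle F(\sqrt\mu g_{i+1})v_1\partial_xg_i,\partial_xg_i\rangle$ is then handled by the macro--micro splitting $\partial_xg_i=P\partial_xg_i+(I-P)\partial_xg_i$: the $P$--$P$ contribution vanishes by oddness of $v_1\mu$, and the remaining pieces each carry a factor $(I-P)\partial_xg_i$, hence are bounded by $C\eta\{\|\partial_xg_i\|_{L^2}^2+\|\nu^{1/2}(I-P)\partial_xg_i\|_{L^2}^2\}$ with the crucial \emph{small} coefficient $\eta$. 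This yields $\tfrac{d}{dt}\{\|\mathbf g\|_{L^2}^2+\|\partial_x\mathbf g\|_{L^2}^2\}\leq C\|\mathbf g\|_{L^2}^2+C\eta\|\partial_x\mathbf g\|_{L^2}^2$, so Gr\"onwall with rate $C\eta<Re\lambda$ gives the correct growth. Only after this unweighted bound is in hand does the paper pass to the $\partial_v$ and then the weighted estimates, where the coercivity loss $C\|\partial_x\mathbf g\|_{L^2}^2$ is now under control. Your $\kappa$-trick for the $[\partial_{v_1},v_1\partial_x]$ commutator is a fine alternative to the paper's use of time-integrated zero-order dissipation, but it does not touch this $\partial_x$ coercivity issue.
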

\begin{proof}
Taking the inner product of \eqref{3.2-0} with $g_i$, we have
\begin{align}\label{3.2-2}
\f{1}{2}\f{\dd }{\dd t}\|g_i\|_{L^2}^2+\underbrace{\langle-Lg_i,g_i\rangle}_{J_1}=&\underbrace{\langle \beta F(\sqrt{\mu}g_{i+1})v_1\sqrt{\mu},g_i\rangle}_{J_2}\nonumber\\
&+\underbrace{\langle -F(\sqrt{\mu}g_{i+1})\pa_{v_1}g_i,g_i\rangle}_{J_3}+
\underbrace{\langle \f\beta2 F(\sqrt{\mu}g_{i+1})g_i v_1,g_i\rangle}_{J_4}.
\end{align}
Firstly,
\begin{align}\label{J1}
J_1=\langle -\Delta g_i+\left(\f{\beta^2|v|^2}{4}-\f{3\beta}{2}\right)g_i,g_i\rangle\geq \|\nabla_vg_i\|_{L^2}^2+\f{\beta}{4}\|\nu^{1/2} g_i\|_{L^2}^2-2\beta\|g_i\|_{L^2}^2.
\end{align}
By Cauchy-Schwarz,
$$
|J_2|+|J_4|\leq C\|g_i\|_{L^2}^2+C\|g_{i+1}\|_{L^2}^2+C\|\nu^{1/2} g_i\|_{L^2}^2\cdot\sup_{0\leq s\leq t}\|g_{i+1}(s)\|_{L^2}.
$$
As for $J_3$, it holds that
$$
J_3=\langle-F(\sqrt{\mu}g_{i+1}),\pa_{v_1}\left(\f{g_i^2}{2}\right)\rangle=0.
$$
Substituting these estimates into \eqref{3.2-2}, we obtain
\begin{align}\label{3.2-4}
\f12\f{\dd}{\dd t}\|\mathbf{g}\|_{L^2}^2+\|\nabla_v\mathbf{g}\|_{L^2}^2+\|\nu^{1/2} \mathbf{g}\|_{L^2}^2\leq C\|\mathbf{g}\|_{L^2}^2+
C\sup_{0\leq s\leq t}\|\mathbf{g}(s)\|_{L^2}\cdot\|\nu^{1/2} \mathbf{g}\|_{L^2}^2.
\end{align}
Next to consider $x$-derivative. Take $\pa_x$ of \eqref{3.2-0} to get
\begin{align}\label{3.2-5}
&\pa_t\pa_xg_i+v_1\pa_x\pa_xg_i-L\pa_xg_i\nonumber\\
&\quad=\beta\pa_xF(\sqrt{\mu}g_{i+1})v_1\sqrt{\mu}-F(\sqrt{\mu}g_{i+1})\pa_{v_1}\pa_xg_i
\nonumber\\
&\quad\quad-\pa_xF(\sqrt{\mu}g_{i+1})\pa_{v_1}g_i+\f{\beta}{2}\pa_xF(\sqrt{\mu}g_{i+1})v_1g_i+\f{\beta}{2}F(\sqrt{\mu}g_{i+1}){v_1}\pa_xg_i.
\end{align}
Then taking the inner product of \eqref{3.2-5} with $\pa_xg_i$, we obtain
\begin{align}\label{3.2-6}
&\f{1}{2}\f{\dd }{\dd t}\|\pa_xg_i\|_{L^2}^2+\langle -L\pa_xg_i,\pa_xg_i \rangle-\langle\beta\pa_xF(\sqrt{\mu}g_{i+1})v_1\sqrt{\mu},\pa_xg_i  \rangle\nonumber\\
&\quad=\langle -F(\sqrt{\mu}g_{i+1})\pa_{v_1}\pa_xg_i,\pa_xg_i\rangle+\langle-\pa_xF(\sqrt{\mu}g_{i+1})\pa_{v_1}g_i,\pa_xg_i\rangle\nonumber\\
&\quad\quad+\f{\beta}{2}\langle\pa_xF(\sqrt{\mu}g_{i+1})v_1g_i,\pa_xg_i\rangle+\f\beta2\langle F(\sqrt{\mu}g_{i+1})v_1\pa_xg_i,\pa_xg_i\rangle.
\end{align}
By the coercivity estimate \eqref{2.2}, we have
\begin{align}
\langle -L\pa_xg_i,\pa_xg_i \rangle\geq c_1\{\|\nu^{1/2}(I-P)\pa_x g_i\|_{L^2}^2+\|\nabla_v(I-P)\pa_xg_i\|_{L^2}^2\}.\nonumber
\end{align}
Integrating by parts leads to
\begin{align}
&|\langle\beta\pa_xF(\sqrt{\mu}g_{i+1})v_1\sqrt{\mu},\pa_xg_i  \rangle|+|\langle -F(\sqrt{\mu}g_{i+1})\pa_{v_1}\pa_xg_i,\pa_xg_i\rangle|\nonumber\\
&\quad=|\langle\beta\pa_{xx}F(\sqrt{\mu}g_{i+1})v_1\sqrt{\mu},g_i\rangle|+|\langle -F(\sqrt{\mu}g_{i+1}),\f{\pa_{v_1}(\pa_xg_i)^2}{2}\rangle|\leq C\|\mathbf{g}\|_{L^2}^2.\nonumber
\end{align}
By Cauchy-Schwarz, it holds that
\begin{align}
&|\langle-\pa_xF(\sqrt{\mu}g_{i+1})\pa_{v_1}g_i,\pa_xg_i\rangle|+\f\beta2|\langle\pa_xF(\sqrt{\mu}g_{i+1})v_1g_i,\pa_xg_i\rangle|\nonumber\\
&\quad\leq C\sup_{0\leq s\leq t}\|g_{i+1}(s)\|_{L^2}\{\|\pa_{v_1} g_i\|_{L^2}^2+\|\pa_x g_i\|_{L^2}^2+\|\nu^{1/2} g_i\|_{L^2}^2\}\nonumber.
\end{align}
It remains to show that one can control the last term on the R.H.S of \eqref{3.2-6}. Recall the projection $P$ defined in \eqref{P}. We use $g_i=Pg_i+(I-P)g_i$ to split
\begin{align}
&\f\beta2\langle F(\sqrt{\mu}g_{i+1})v_1\pa_xg_i,\pa_xg_i\rangle\nonumber\\
&\quad=\f\beta2\langle F(\sqrt{\mu}g_{i+1})v_1\pa_xPg_i,\pa_xPg_i\rangle+\beta\langle F(\sqrt{\mu}g_{i+1})v_1\pa_xPg_i,\pa_x(I-P)g_i\rangle\nonumber\\
&\qquad+\f\beta2\langle F(\sqrt{\mu}g_{i+1})v_1\pa_x(I-P)g_i,\pa_x(I-P)g_i\rangle.\nonumber
\end{align}
The first term on the R.H.S vanishes due to the evenness of $\sqrt{\mu}$. Other two terms are controlled in terms of Cauchy-Schwarz. And so
\begin{align}
\f\beta2|\langle F(\sqrt{\mu}g_{i+1})v_1\pa_xg_i,\pa_xg_i\rangle|\leq C\sup_{0\leq s\leq t}\|g_{i+1}(s)\|_{L^2}\{\|\pa_xg_i\|_{L^2}^2+\|\nu^{1/2}\pa_x(I-P)g_i\|_{L^2}^2\}.\nonumber
\end{align}
Substituting these estimates into \eqref{3.2-6}, we obtain that
\begin{align}
&\f12\f{\dd}{\dd t}\|\pa_x\mathbf{g}\|_{L^2}^2+\f{c_1}{2}\{\|\nu^{1/2}(I-P)\pa_x\mathbf{g}\|_{L^2}^2+\|\nabla_v(I-P)\pa_x\mathbf{g}\|_{L^2}^2\}\nonumber\\
&\quad\leq C\|\mathbf{g}\|_{L^2}^2+C\{\|\nabla_v \mathbf{g}\|_{L^2}^2+\|\pa_x \mathbf{g}\|_{L^2}^2+\|\nu^{1/2} \mathbf{g}\|_{L^2}^2+\|\nu^{1/2}(I-P)\pa_x\mathbf{g}\|_{L^2}^2\}\cdot\sup_{0\leq s\leq t}\|\mathbf{g}(s)\|_{L^2}.\nonumber
\end{align}
Combine this with \eqref{3.2-4} and use \eqref{3.2-3} to get, for $\eta$ sufficiently small, that
\begin{align}\label{3.2-7-1}
\f12\f{\dd}{\dd t}\{\|\mathbf{g}\|^2_{L^2}+\|\pa_x\mathbf{g}\|_{L^2}^2\}\leq C\|\mathbf{g}\|_{L^2}^2+C\eta\cdot\|\pa_{x}\mathbf{g}\|_{L^2}^2.
\end{align}
Now applying Gronwall's inequality to \eqref{3.2-7-1}, we obtain, for suitably small $\eta>0$, that
\begin{align}\label{3.2-7}
&\|\pa_x\mathbf{g}\|_{L^2}^2+\|\mathbf{g}\|_{L^2}^2\nonumber\\
&\quad\leq Ce^{C\eta t}\{\|\pa_x\mathbf{g}_0\|_{L^2}^2+\|\mathbf{g}_0\|_{L^2}^2\}+C\int_0^te^{C\eta(t-s)}\|\mathbf{g}(s)\|_{L^2}^2\dd s \nonumber\\
&\quad\leq Ce^{C\eta t}\{\|\pa_x \mathbf{g}_0\|_{L^2}^2+\|\mathbf{g}_0\|_{L^2}^2\}+Ce^{2Re\lambda t}\int_0^te^{-(2Re\lambda-C\eta)(t-s)}\dd s\cdot \sup_{0\leq s\leq t}e^{-2Re\lambda s}\|\mathbf{g}(s)\|_{L^2}^2\nonumber\\
&\quad\leq Ce^{2Re\lambda t}\{\|\pa_x \mathbf{g}_0\|_{L^2}^2+\|\mathbf{g}_0\|_{L^2}^2\}.
\end{align}
Here we have used \eqref{3.2-3.1} in the last inequality. For $v$-derivatives, we take $\pa_{v_j}$ $(j=1,2.3)$ to \eqref{3.2-0} to get
\begin{align}
&\pa_t\pa_{v_j}g_i+v_1\pa_x\pa_{v_j}g_i-L\pa_{v_j}g_i\nonumber\\
&\quad=-[\pa_{v_j},v_1\pa_x-L]g_i+\beta F(\sqrt{\mu}g_{i+1})\pa_{v_j}(v_1\sqrt{\mu})\nonumber\\
&\qquad-F(\sqrt{\mu}g_{i+1})\pa_{v_j}\pa_{v_1}g_i+\f{\beta}{2}F(\sqrt{\mu}g_{i+1})[v_1\pa_{v_j}g_i+g_i\delta_{1j}].\label{3.2-8}
\end{align}
Here the commutator satisfies $[\pa_{v_j},v_1\pa_x-L]=\delta_{1j}\pa_x+\f{\beta^2v_j}{2}$, where $\delta_{1j}=1$ for $j=1$ and $\delta_{1j}=0$ for $j=2,3$. Again, taking the inner product of \eqref{3.2-8} with $\pa_{v_j}g_i$ leads to
\begin{align}\label{3.2-9}
&\f{1}{2}\f{\dd }{\dd t}\|\pa_{v_j}g_i\|_{L^2}^2+\langle-L\pa_{v_j}g_i,\pa_{v_j}g_i\rangle\nonumber\\
&\quad=\langle-\left[\pa_{v_j},(v_1\pa_x-L)\right]g_i,\pa_{v_j}g_i\rangle+\langle\beta F(\sqrt{\mu}g_{i+1})\pa_{v_j}(v_1\sqrt{\mu}),\pa_{v_j}g_i\rangle\nonumber\\
&\qquad+\langle -F(\sqrt{\mu}g_{i+1})\pa_{v_j}\pa_{v_1}g_i,\pa_{v_j}g_i\rangle+\f\beta2\langle F(\sqrt{\mu}g_{i+1})[v_1\pa_{v_j}g_i+g_i\delta_{1j}],\pa_{v_j}g_i\rangle.
\end{align}
Similar as in \eqref{J1}, it holds that
\begin{align}
\langle -L\pa_{v_j}g_i,\pa_{v_j}g_i \rangle\geq \|\nabla_v\pa_{v_j} g_i\|_{L^2}^2+\f{\beta}{4}\|\nu^{1/2}\pa_{v_j}g_i\|_{L^2}^2-2\beta\|\pa_{v_j}g_i\|_{L^2}^2.\nonumber
\end{align}
Next to estimate the R.H.S of \eqref{3.2-9}. A direct computation shows that
\begin{align}
|\langle\left[\pa_{v_j},v_1\pa_x-L\right]g_i,\pa_{v_j}g_i\rangle|&=|\langle \delta_{1j}\pa_x{g_i}+\f{\beta^2v_j}{2}g_i,\pa_{v_j}g_i\rangle|\nonumber\\
&\leq C\|\pa_{x}g_i\|_{L^2}^2+C\|\pa_{v_j}g_i\|_{L^2}^2+C\|\nu^{1/2} g_i\|_{L^2}^2,\nonumber
\end{align}
and
$$
\langle -F(\sqrt{\mu}g_{i+1})\pa_{v_j}\pa_{v_i}g_i,\pa_{v_j}g_i\rangle=\langle -F(\sqrt{\mu}g_{i+1}),\pa_{v_i}\f{(\pa_{v_j}g_i)^2}{2}\rangle=0.
$$
By Cauchy-Schwarz, it holds that
\begin{align}
&|\langle\beta F(\sqrt{\mu}g_{i+1})\pa_{v_j}(v_1\sqrt{\mu}),\pa_{v_j}g_i\rangle|+\f\beta2|\langle F(\sqrt{\mu}g_{i+1})(g_i\delta_{1j}+v_1\pa_{v_j}g_i),\pa_{v_j}g_i\rangle|\nonumber\\
&\quad\leq C\{\|\mathbf{g}\|_{L^2}+\|\nabla_v\mathbf{g}\|_{L^2}^2\}+C\sup_{0\leq s\leq t}\|g_{i+1}(s)\|_{L^2}\cdot\|\nu^{1/2}\nabla_{v}g_i\|_{L^2}^2. \nonumber
\end{align}
This completes the estimates on the R.H.S of \eqref{3.2-9}. Integrating it over $[0,t]$, we have, for sufficiently small $\eta>0$, that
\begin{align}\label{3.2-10}
\|\nabla_v\mathbf{g}\|_{L^2}^2&\leq C\|\nabla_v\mathbf{g}_0\|_{L^2}^2+C\int_0^t\|\pa_{x}\mathbf{g}(s)\|_{L^2}^2+\|\nabla_{v}\mathbf{g}(s)\|_{L^2}^2+\|\nu^{1/2} \mathbf{g}(s)\|_{L^2}^2\dd s\nonumber\\
&\leq C\|\nabla_v\mathbf{g}_0\|_{L^2}^2+Ce^{2Re\lambda t}\{\|\mathbf{g}_0\|_{L^2}^2+\|\pa_x\mathbf{g}_0\|_{L^2}^2\}.
\end{align}
Here we have used \eqref{3.2-7} and \eqref{3.2-4} in the last inequality. Now the estimate \eqref{3.2-1} with $w\equiv1$ directly follows from \eqref{3.2-7} and \eqref{3.2-10}. Finally, we turn to the weighted estimate. Taking the inner product of \eqref{3.2-0} and \eqref{3.2-5} with $w^2g_1$ and $w^2\pa_x g_i$ respectively, adding them together and then using Cauchy-Schwarz, we obtain
\begin{align}\label{3.2-11}
&\f12\f{\dd }{\dd t}\{\|wg_i\|_{L^2}^2+\|w\pa_xg_i\|_{L^2}^2\}+\underbrace{\langle-Lg_i,w^2g_i\rangle+\langle-L\pa_xg_i,w^2\pa_xg_i\rangle}_{J_5}\nonumber\\
&\quad\leq \zeta\{\|\nu^{1/2} wg_i\|_{L^2}^2+\|\nu^{1/2} w\pa_xg_i\|_{L^2}^2\}+C_{\zeta}\|\mathbf{g}\|_{L^2}^2+C\sup_{0\leq s\leq t}\|\mathbf{g}(s)\|_{L^2}\nonumber\\
&\quad\quad\times\left\{\|\nu^{1/2} wg_i(s)\|_{L^2}^2+\|\nu^{1/2} w\pa_xg_i(s)\|_{L^2}^2+\|w\nabla_vg_i\|_{L^2}^2+\|w\pa_x\nabla_vg_i\|_{L^2}^2\right\}.
\end{align}
Here $\zeta>0$ can be chosen arbitrarily small. Similar as in \eqref{4.1-4}, it holds that
\begin{align}
J_5\geq &\|w\nabla_vg_i\|_{L^2}^2+\|w\nabla_v\pa_xg_i\|_{L^2}^2+\f{\beta^2}{16}\{\|\nu^{1/2} wg_i\|_{L^2}^2+\|\nu^{1/2} w\pa_xg_i\|_{L^2}^2\}\nonumber\\
&\quad-C\{\|g_i\|_{L^2}^2+\|\pa_xg_i\|_{L^2}^2\}.\nonumber
\end{align}
Substituting this into \eqref{3.2-11} and taking $\zeta>0$ suitably small, one has
\begin{align}\label{3.2-12}
&\f12\f{\dd }{\dd t}\{\|wg_i\|_{L^2}^2+\|w\pa_xg_i\|_{L^2}^2\}\nonumber\\
&\qquad+\|w\nabla_vg_i\|_{L^2}^2+\|w\nabla_v\pa_xg_i\|_{L^2}^2+\|\nu^{1/2} wg_i\|_{L^2}^2+\|\nu^{1/2} w\pa_xg_i\|_{L^2}^2\nonumber\\
&\quad\leq C\|\mathbf{g}\|_{L^2}^2+C\|\pa_x\mathbf{g}\|_{L^2}^2+C\sup_{0\leq s\leq t}\|\mathbf{g}(s)\|_{L^2}\nonumber\\ &\qquad\times\left\{\|\nu^{1/2} wg_i(s)\|_{L^2}^2+\|\nu^{1/2} w\pa_xg_i(s)\|_{L^2}+\|w\nabla_vg_i\|_{L^2}^2+\|w\pa_x\nabla_vg_i\|_{L^2}^2\right\}.
\end{align}
Similarly, take the inner product of \eqref{3.2-8} with $w^2\pa_{v_j}g_i$ and use Cauchy-Schwarz to obtain that
\begin{align}\label{3.2-13}
&\f12\f{\dd}{\dd t}\|w\pa_{v_j}g_i\|_{L^2}^2+\{\|\nu^{1/2} w\pa_{v_j}g_i\|_{L^2}^2+\|w\nabla_v\pa_{v_j}g_i\|_{L^2}^2\}\nonumber\\
&\quad\leq C\{\|\nu^{1/2} w\mathbf{g}\|_{L^2}^2+\|\nu^{1/2} w\pa_x\mathbf{g}\|_{L^2}^2+\| w\nabla_v\mathbf{g}\|_{L^2}^2\}\nonumber\\
&\qquad+C\sup_{0\leq s\leq t}\|\mathbf{g}(s)\|_{L^2}\cdot\{\|\nu^{1/2} wg_i\|_{L^2}^2+\|\nu^{1/2} w\nabla_v g_i\|_{L^2}^2+\| w\nabla_v^2 g_i\|_{L^2}^2\}.
\end{align}
Suitably combining \eqref{3.2-12} with \eqref{3.2-13} and taking $\eta>0$ sufficiently small, we have
\begin{align}
&\|w\mathbf{g}\|_{L^2}^2+\|w\pa_x\mathbf{g}\|_{L^2}^2+\|w\nabla_v\mathbf{g}\|_{L^2}^2\nonumber\\
&\quad\leq C\{\|w\mathbf{g}_0\|_{L^2}^2+\|w\pa_x\mathbf{g}_0\|_{L^2}^2+\|w\nabla_v\mathbf{g}_0\|_{L^2}^2\}+C\int_0^t\|\mathbf{g}(s)\|_{L^2}^2+\|\pa_x\mathbf{g}(s)\|_{L^2}^2\dd s\nonumber\\
&\quad\leq Ce^{2Re\lambda t}\{\|w\mathbf{g}_0\|_{L^2}^2+\|w\pa_x\mathbf{g}_0\|_{L^2}^2+\|w\nabla_v\mathbf{g}_0\|_{L^2}^2\}.\nonumber
\end{align}
Therefore, the proof of Lemma \ref{lm3.2} is completed.
\end{proof}
Now we are in the position to prove Theorem \ref{thm1.1}. We only sketch the proof which is similar to arguments developed in \cite{EGM1}\\

\underline{\it Proof of Theorem \ref{thm1.1}: } The proof is divided into two steps.\\

Step. 1. Initial positivity: We take $\CR$ to be the eigenvector whose eigenvalue $\lambda$ has the largest positive real part (see Lemma \ref{lm3.0}). Notice that by \eqref{3.1-1} and Sobolev embedding $H^N(\Omega\times\mathbb{R}^3)\hookrightarrow L^{\infty}(\Omega\times\mathbb{R}^3)$ ($N\geq 3$), we have the following pointwise estimate:
\begin{align}\label{R}
\sup_{x,v}|e^{\f{q\beta|v|^2}{4}}\CR(x,v)|\leq C,
\end{align}
for $0\leq q\leq 1/2$. Now we claim that there exists a sequence of approximate eigenvectors $\CR^{\vep}(x,v)$ in the sense that:
\begin{align}
&\vep |\CR^{\vep}(x,v)|\sqrt{\mu}\leq \mu,\quad\|\CR^{\vep}-\CR\|_{L^2}\leq C\vep^{1/2},\nonumber\\
&\|w\CR^{\vep}\|_{H^N_{x,v}}\leq C\|w\CR\|_{H^N_{x,v}}\leq C,\label{3.3-1}
\end{align}
for sufficiently small $\vep>0$ and some positive constants $C>0$ independent of $\vep$. Indeed, introduce the following smooth cut-off function $0\leq \chi^{\vep}(s)\leq 1$:
$$\chi^\vep(s)=\left\{\begin{aligned}
&1, \quad 0\leq s\leq \sqrt{\f{6}{\beta}|\log C\vep|},\\
&0, \quad s\geq \sqrt{\f{8}{\beta}|\log C\vep|},
\end{aligned}\right.
$$
for a suitably chosen constant $C>0$ independent of $\vep$. Let $R_i^{\vep}(x,v)=R_i(x,v)\chi^{\vep}(|v|)$. Then by the aid of \eqref{R}, we directly compute that
$$\begin{aligned}
\vep |\CR^{\vep}(x,v)|\sqrt{\mu}&\leq \vep|\CR\sqrt{\mu}\Fi_{\{|v|\leq\sqrt{\f{8}{\beta}|\log C\vep|}\}}|\leq C\vep \mu(v)|e^{\f\beta8|v|^2}\Fi_{\{|v|\leq\sqrt{\f{8}{\beta}|\log C\vep|}\}}|\\
&\leq C\vep\mu(v) e^{|\log C\vep|}\leq \mu(v),
\end{aligned}
$$
$$\begin{aligned}
\|\CR-\CR^{\vep}\|_{L^2}^2&\leq C\int_{|v|\geq \sqrt{\f{6}{\beta}|\log C\vep|}}e^{-\f\beta4|v|^2}\dd v\\
&\leq C e^{-|\log C\vep|}\cdot\int_{|v|\geq \sqrt{\f{6}{\beta}|\log C\vep|}}e^{-\f\beta{12}|v|^2}\dd v\leq C\vep,
\end{aligned}
$$
and
$$\|w\CR^{\vep}\|_{H^{N}_{x,v}}\leq C\|\chi^{\vep}\|_{C^N}\cdot\|w\CR\|_{H^N_{x,v}}\leq C.
$$
This shows \eqref{3.3-1}. Moreover, if the principle eigenvalue $\lambda_1$ is not real, one can prove, by the same argument as in Lemma 6.3 of \cite{EGM1}, that  under the dynamics of the linearized Vlasov-Fokker-Planck system, the imaginary part of its eigenvector $Im\CR$ with $\|Im\CR\|_{L^2}=r>0$ grows exponentially in the sense that
\begin{align}
\|e^{-t\CL}Im\CR\|_{L^2}\geq \varrho e^{Re\lambda_1 t}\|Im\CR\|_{L^2}\geq \varrho e^{Re\lambda_1 t}r ,\nonumber
\end{align}
for some positive constant $\varrho>0$. If $\lambda_1$ is real, we only take the real part. Define a family of initial data: $$\mathbf{f}^{\vep}(0,x,v)=\mu+\sqrt{\mu}\mathbf{g}^{\vep}(0,x,v):=\mu+\vep Im\CR^{\vep}\sqrt{\mu}.$$
Notice that from \eqref{3.3-1}, it holds that
$$\mathbf{f}^{\vep}(0,x,v)\geq 0\quad \text{and}\quad \|w\mathbf{g}^{\vep}_0\|_{H^{N}_{x,v}}\leq C_{N}\vep.
$$
\\

Step. 2. Justification of the escape time: Choose $\Lambda>0$ such that $Re\lambda_1<\Lambda<\f{3}{2}Re\lambda_1$ and define the following time points:
$$
\begin{aligned}
\hat{T}^{*}:&=\f{1}{\Lambda-Re\lambda_1}|\ln\f{\varrho r}{2C_{\Lambda}\sqrt{\vep}}|,\\
\hat{T}^{**}:&=\sup_{s}\{s:\text{ }\|\mathbf{g}^{\vep}(s)\|_{L^2}\leq \eta,\text{ for all }0\leq t\leq s \},\\
\hat{T}^{***}:&=\sup_{s}\{s:\text{ }\|\mathbf{g}^{\vep}(s)-\vep e^{-s\CL}Im\CR^{\vep}\|_{L^2}\leq \f{\varrho}{4}\vep e^{Re\lambda_1 s}r,\text{ for all }0\leq t\leq s\},
\end{aligned}
$$
and $T^{\vep}:=\f{1}{Re\lambda_1}\ln\f{\delta_0}{\vep}$. Then Theorem \ref{thm1.1} directly follows from the following claim:
\begin{align}\label{ti}
0<T^\vep\leq \min\{\hat{T}^{*},\hat{T}^{**},\hat{T}^{***}\}.
\end{align}
In fact, if \eqref{ti} holds, then one has from Lemma \ref{lm3.0} and \eqref{3.3-1} that
\begin{align}
\|\mathbf{g}^{\vep}(T^{\vep})\|_{L^2}&\geq \|\vep e^{-T^{\vep}\CL}Im\CR\|_{L^2}-\|\vep e^{-T^{\vep}\CL}(Im\CR-Im\CR^{\vep})\|_{L^2}-\|\mathbf{g}^{\vep}(T^{\vep})-\vep e^{-T^{\vep}\CL}Im\CR^{\vep}\|_{L^2}\nonumber\\
&\geq \varrho\vep e^{Re\lambda_1 T^{\vep}}r-C_{\Lambda}\vep^{3/2}e^{\Lambda T^{\vep}}-\f{\varrho}{4}\vep e^{Re\lambda_1 T^{\vep}}r\geq \f{3\varrho }{4}\vep e^{Re\lambda_1 T^{\vep}}r-C_{\Lambda}\sqrt{\delta_0}\vep e^{Re\lambda_1 T^{\vep}}\nonumber\\
&\geq \f{\varrho }{4}\vep e^{Re\lambda_1 T^{\vep}}r\geq \f{\varrho r}{4}\delta_0,\nonumber
\end{align}
provided that $\delta_0>0$ is suitably small. And so it suffices to prove \eqref{ti}. We first notice that
$$\hat{T}^{*}\geq \f{1}{Re\lambda_1}|\log\f{\varrho^2r^2}{4C^2_{\Lambda}\vep}|\geq T^{\vep},
$$
for $0<\delta_0\leq\f{\varrho^2r^2}{4C_{\Lambda}^2}.$ Next to prove $T^{\vep}\leq \min\{\hat{T}^{**},\hat{T}^{***}\}$. On one hand, if $\hat{T}^{**}\leq \min\{T^{\vep},\hat{T}^{***}\}$, then it holds that
$$\begin{aligned}
\|\mathbf{g}^{\vep}(\hat{T}^{**})\|_{L^2}&\leq\|\vep e^{-T^{\vep}\CL}Im\CR\|_{L^2}+\|\vep e^{-T^{\vep}\CL}(Im\CR-Im\CR^{\vep})\|_{L^2}+\|\mathbf{g}^{\vep}(T^{\vep})-\vep e^{-T^{\vep}\CL}Im\CR^{\vep}\|_{L^2}\\
&\leq r\vep e^{Re\lambda_1 T^{\vep}}+C_{\Lambda}\vep^{3/2}e^{\Lambda T^{\vep}}+\f{\varrho}{4}\vep e^{Re\lambda_1 T^{\vep}}r\leq C\delta_0.
\end{aligned}
$$
This leads to a contradiction to the definition of $\hat{T}^{**}$, if we choose $0<\delta_0\leq \f{\eta}{2C}.$ On the other hand, assume that $\hat{T}^{***}\leq \min\{T^{\vep},\hat{T}^{**}\}$. Then for any $0\leq t\leq \hat{T}^{***}$, it holds that
\begin{align}\label{e}
\|\mathbf{g}^\vep(t)\|_{L^2}&\leq \|\vep e^{-\CL t}Im\CR\|_{L^2}+\|\vep e^{-\CL t}(Im\CR-Im\CR^{\vep})\|_{L^2}+\|\mathbf{g}^{\vep}(t)-\vep e^{-\CL t}Im\CR^{\vep}\|_{L^2}\nonumber\\
&\leq \vep e^{Re\lambda_1 t}r+C_{\Lambda}\vep^{3/2}e^{\Lambda t}+\f{\varrho}{4}\vep e^{Re\lambda_1 t}r\leq C\vep e^{Re\lambda_1 t}r.
\end{align}
Recall the equation \eqref{3.2-0}. Denote the nonlinear term on the R.H.S as $\mathbf{N}(\mathbf{g})=[N_1(\mathbf{g}),N_2(\mathbf{g})]$ where $$N_i=-F(\sqrt{\mu}g_{i+1})\pa_{v_1}g_i+\f{\beta}{2}F(\sqrt{\mu}g_{i+1})v_1g_{i}.$$
Then applying the Duhamel principle to \eqref{3.2-0} and using Lemma \ref{lm3.0}, Lemma \ref{lm3.2} and \eqref{e}, we have
$$
\begin{aligned}
\|\mathbf{g}^{\vep}(t)-\vep e^{-\CL t}Im\CR^{\vep}\|_{L^2}&=\|\int_{0}^te^{-\CL(t-s)}\mathbf{N}(\mathbf{g}^\vep)(s)\dd s\|_{L^2}\\
&\leq C_{\Lambda}\int_{0}^te^{\Lambda(t-s)}e^{2Re\lambda_1 s}\{\|w\pa_{x,v}\mathbf{g}^{\vep}_0\|_{L^2}+\|w\mathbf{g}^{\vep}_0\|_{L^2}\}^2\leq C\vep^2e^{2Re\lambda_1 t}.
\end{aligned}
$$
And so, at $t=\hat{T}^{***}$, one has
$$\|\mathbf{g}^{\vep}(\hat{T}^{***})-\vep e^{-\CL \hat{T}^{***}}Im\CR^{\vep}\|_{L^2}\leq C\vep^2e^{Re\lambda_1 T^{\vep}}\cdot e^{Re\lambda_1\hat{T}^{***}}\leq C\delta_0\vep e^{Re\lambda_1 \hat{T}^{***}}.
$$
This is a contradiction to the definition of $\hat{T}^{***}$ if we take $\delta_0$ suitably small. Therefore, \eqref{ti} holds and the proof of Theorem \ref{thm1.1} is completed. $\hfill\Box$\\

\noindent{\bf Acknowledgments.} This work was done when the author was visiting Institute for Analysis and Scientific Computing at Vienna University of Technology. He thanks them for their kind hospitality and gratefully acknowledges support from the Eurasia-Pacific Uninet Ernst Mach Grant.

\end{document}